\theoremstyle{definition}
\newtheorem{definition}{Definition}[section]
\theoremstyle{plain}
\newtheorem{Theorem}[definition]{Theorem}
\newtheorem{Lemma}[definition]{Lemma}
\theoremstyle{remark}
\newtheorem{remark}[definition]{Remark}
\newcommand{\R}{\mathbb R}
\newcommand{\Vol}{\mathrm{Vol}}
\newcommand{\Inj}{\mathrm{Inj}}
\newcommand{\eps}{\varepsilon}
\title{The exponential map of a $C^{1,1}$-metric}
\author{Michael Kunzinger\footnote{University of Vienna, Faculty of Mathematics, 
Oskar-Morgenstern-Platz 1, A-1090 Vienna, Austria
michael.kunzinger@univie.ac.at, roland.steinbauer@univie.ac.at, milena.stojkovic@live.com}, 
Roland Steinbauer\footnotemark[\value{footnote}], Milena Stojkovi\'c\footnotemark[\value{footnote}]}
\begin{document}

\date{}

\maketitle

\begin{abstract}
Given a pseudo-Riemannian metric of regularity $C^{1,1}$ on a smooth manifold, we
prove that the corresponding exponential map is a bi-Lipschitz homeomorphism locally around
any point. We also establish the existence of totally normal neighborhoods in an
appropriate sense. The proofs are based on regularization, combined with methods from comparison
geometry. 

\vskip 1em

\noindent
{\em Keywords:} Exponential map, low regularity, (totally) normal neighborhoods

\noindent
{\em MSC 2010:} 53B20, 53B21, 53B30

\end{abstract}

\section{Introduction}

In (smooth) pseudo-Riemannian geometry, the fact that the exponential map
is a diffeomorphism locally around $0$ is of central importance for many fundamental
constructions. As examples we mention the existence of normal neighborhoods, Riemannian
normal coordinates, geodesic convexity, injectivity and convexity radius, comparison
methods, or also, in the Lorentzian case, causality theory. 

The standard way of proving this result rests on an application of the inverse function
theorem. It is therefore applicable to $C^2$ pseudo-Riemannian metrics, where the
exponential map itself is still $C^1$. On the other hand, the lowest regularity for which
the geodesic equation still has unique solutions in general is $C^{1,1}$ (continuously
differentiable with Lipschitz derivatives). In the literature it is generally held that
$C^{1,1}$ delimits the regularity where one can still reasonably expect the `standard'
results to remain valid. However, for $C^{1,1}$-metrics the exponential map is only
Lipschitz, so the inverse function theorem is no longer applicable. In \cite{C11}, which 
provides a careful analysis of causality theory in minimal regularity, P.\ Chru\'sciel
indicates that certain inverse function theorems for Lipschitz functions might shed light
on this problem. To our knowledge, however, so far there is no proof available in the
literature that the exponential map of a $C^{1,1}$-metric retains the maximal possible
regularity, namely that it is a bi-Lipschitz homeomorphism around $0$. In this work
we supply a proof of this result.

In analogy to the smooth case one may call the image of a (star-shaped) open
neighborhood of $0$ in $T_pM$ where $\exp_p$ is a bi-Lipschitz homeomorphism
a normal neighborhood of $p$. We show that in fact there always exist totally normal 
neighborhoods around any point of the manifold, i.e., open sets that are normal
neighborhoods of each of their elements. 

Our method of proof consists in regularizing the metric locally via convolution with
a mollifier to obtain a net $g_\eps$ of smooth metrics of the same signature. We then use 
methods from comparison geometry to obtain sufficiently strong estimates on the
exponential maps of the regularized metrics to be able to carry the bi-Lipschitz
property through the limit $\eps\to 0$. More precisely, we rely on new comparison methods,   
developed only recently by B.\ L.\ Chen and P.\ LeFloch in their studies
on the injectivity radius of Lorentzian metrics (\cite{Chen}).
In the Riemannian case, one may alternatively use the Rauch comparison theorem,
as well as injectivity radius estimates due to Cheeger, Gromov and Taylor, as 
will be pointed out in Section \ref{Riemsec}.  To show
existence of totally normal neighborhoods we use the uniform estimates derived above
to adapt the standard proof of the smooth case. 

Our notation is standard, cf., e.g., \cite{Jost,ON83}. By $B_h(p,r)$ we denote the open ball
around the point $p$ of radius $r$ with respect to the Riemannian metric $h$. To distinguish
exponential maps stemming from various metrics we will use a superscript, as in $\exp_p^g$.

To conclude this introductory section, we recall what is known about the exponential map of
$C^{1,1}$ pseudo-Riemannian metric in general. In J.\ H.\ C.\ Whitehead's classical paper 
\cite{W32}, a path 
is a solution of a system of ODEs of the form
$$
\frac{d^2c^k}{dt^2}+\Gamma_{ij}^{k}(c(t))\frac{dc^{i}}{dt}\frac{dc^{j}}{dt}=0,
$$
where the $\Gamma_{ij}^{k}$ are merely supposed to be Lipschitz and symmetric in $i$, $j$
(but are not necessarily the Christoffel symbols of some metric).
In \cite[Sec.\ 3]{W32} it is proved that under these mild assumptions every point $p\in M$ has a neighborhood 
$S$ which is a {\em simple region}, in the sense that any two points in $S$ can be connected by at most one path. 
In particular, this result applies to the geodesics of a $C^{1,1}$ pseudo-Riemannian metric $g$ of arbitrary signature.
It follows that 
$\exp^g_p: (\exp_p^g)^{-1}(S)\to S$ is continuous and bijective, hence a homeomorphism
by invariance of domain, and one has:
\begin{Theorem}\label{mainsemi}
Let $M$ be a smooth manifold with a $C^{1,1}$ pseudo-Riemannian metric $g$ and let $p\in M$. Then there exist open neighborhoods
$U$ of $0\in T_{p}M$ and $V$ of $p$ in M such that 
\begin{equation*}
 \exp^g_{p}:U\rightarrow\ V
\end{equation*}
is a homeomorphism.
\end{Theorem}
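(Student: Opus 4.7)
The proof essentially assembles the ingredients spelled out in the paragraph preceding the statement; I would proceed in three steps.

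First, fix a chart centered at $p$. Since $g$ is $C^{1,1}$, the Christoffel symbols $\Gamma^{k}_{ij}$ are Lipschitz and symmetric in $i,j$ on this chart, and the geodesic equation, written as a first-order Lipschitz system on an open subset of $\R^{n}\times \R^{n}$, admits unique solutions depending continuously on initial data by Picard--Lindel\"of. Hence $\exp^g_p$ is defined and continuous on some open neighborhood of $0\in T_pM$.

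Second, I would invoke Whitehead's theorem \cite[Sec.\ 3]{W32}, which applies verbatim in this setting of merely Lipschitz $\Gamma^{k}_{ij}$, to obtain an open simple region $S\ni p$, i.e.\ one in which any two points are joined by at most one geodesic. By continuity of $\exp_p^g$ at $0$, I can then pick a star-shaped open neighborhood $U$ of $0\in T_pM$ on which $\exp_p^g$ is defined and with $V:=\exp_p^g(U)\subseteq S$.

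Third, I would argue that $\exp_p^g\colon U\to V$ is a continuous bijection and invoke Brouwer's invariance of domain. Continuity is built in and surjectivity onto $V$ holds by definition. For injectivity, suppose $v\neq w$ in $U$ satisfy $\exp_p^g(v)=\exp_p^g(w)=:q$; since $q\in V\subseteq S$ and the geodesics $t\mapsto \exp_p^g(tv)$ and $t\mapsto \exp_p^g(tw)$ have distinct initial velocities, they are two distinct paths of the system joining $p,q\in S$, contradicting the simple region property. Since $\dim T_pM=\dim M$, invariance of domain then shows that $V$ is open and $\exp_p^g\colon U\to V$ is a homeomorphism.

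At this level of regularity no genuine obstacle arises: the statement reduces neatly to Whitehead's result combined with invariance of domain, the crucial point being that Lipschitz regularity of the Christoffel symbols is both enough to run Picard--Lindel\"of and exactly the hypothesis under which \cite{W32} produces simple regions. The real work of the paper lies elsewhere, namely in upgrading this homeomorphism to a \emph{bi-Lipschitz} homeomorphism, for which the inverse function theorem is unavailable and one must pass through regularization and comparison geometry.
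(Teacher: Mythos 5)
Your proof is correct and follows essentially the same route as the paper: Whitehead's simple-region theorem for Lipschitz symmetric $\Gamma^k_{ij}$ gives injectivity, and Brouwer's invariance of domain upgrades the resulting continuous bijection to a homeomorphism. Your version is if anything slightly more careful than the paper's one-line sketch, since by defining $V:=\exp_p^g(U)$ you sidestep the question of surjectivity onto the simple region $S$ itself.
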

Our aim thus is to strengthen this result by additionally establishing the bi-Lipschitz property of 
$\exp_p^{g}$. We note, however, that our proof is self-contained and will not pre-suppose Th.\ \ref{mainsemi}.
Rather, it implicitly provides an alternative proof for this result.

\section{The main result}\label{mainsec}
The aim of this section is to prove the following result:
\begin{Theorem}\label{mainpseudo}
Let $M$ be a smooth manifold with a $C^{1,1}$ pseudo-Riemannian metric $g$ and let $p\in M$. Then there exist open neighborhoods
$U$ of $0\in T_{p}M$ and $V$ of $p$ in M such that 
\begin{equation*}
 \exp^g_{p}:U\rightarrow\ V
\end{equation*}
is a bi-Lipschitz homeomorphism.
\end{Theorem}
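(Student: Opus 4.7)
The plan is to follow the strategy indicated in the introduction: regularize $g$ locally to a net of smooth metrics $g_\eps$, prove that the exponential maps $\exp_p^{g_\eps}$ are \emph{uniformly} bi-Lipschitz on a common domain $U \subset T_pM$ (with constants independent of $\eps$), and then transfer this property to $\exp_p^g$ by a limit argument.

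More concretely, fix a chart centered at $p$, a standard mollifier $\rho_\eps$, and set $g_\eps := g * \rho_\eps$ componentwise (after a harmless cut-off to make the convolution well-defined). Since $g$ is $C^{1,1}$, one has $g_\eps \to g$ in $C^1_{\mathrm{loc}}$, while $\|\partial^2 g_\eps\|_{L^\infty}$ remains \emph{uniformly bounded in $\eps$}: the Lipschitz bound on $\partial g$ transfers to a uniform sup-bound on $\partial^2 g_\eps$. Consequently the Christoffel symbols $\Gamma_\eps$ and their first derivatives, and hence the curvature tensors $R_\eps$, are uniformly bounded on a suitable neighborhood of $p$. Standard continuous-dependence theorems for ODEs then ensure that the maps $\exp_p^{g_\eps}$ are all defined on a common ball $U$ in $T_pM$, converge uniformly on $U$ to $\exp_p^g$, and are uniformly Lipschitz. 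This already yields the Lipschitz \emph{upper} bound on $\exp_p^g$.

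The real issue is the matching \emph{lower} Lipschitz bound uniformly in $\eps$. For each smooth $g_\eps$ this amounts to a uniform lower bound on $\|D\exp_p^{g_\eps}(v)\|$ for $v \in U$, which via the Jacobi equation translates to uniform control on Jacobi fields along the $g_\eps$-geodesics. In the general pseudo-Riemannian setting I would invoke the comparison results of Chen--LeFloch \cite{Chen}, which, under the uniform curvature and metric bounds already secured, produce a uniform positive lower bound on the injectivity radius of the $g_\eps$ together with enough control on their Jacobi fields to ensure that each $\exp_p^{g_\eps}$ is bi-Lipschitz on a fixed star-shaped $U$ with constants independent of $\eps$. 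In the Riemannian case the same conclusion follows from Rauch's comparison theorem combined with Cheeger--Gromov--Taylor injectivity radius estimates. I expect this comparison-geometric step to be the principal obstacle: the indefiniteness of $g$ rules out straightforward Riemannian arguments, and without a uniform two-sided control on Jacobi fields one cannot prevent the exponential maps from degenerating in the limit.

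To conclude, I would combine the uniform two-sided Lipschitz estimates on the $\exp_p^{g_\eps}$ with the uniform convergence $\exp_p^{g_\eps} \to \exp_p^g$ on $U$: bi-Lipschitz constants pass to uniform limits, so $\exp_p^g\colon U \to V := \exp_p^g(U)$ inherits a two-sided Lipschitz estimate. Injectivity of $\exp_p^g$ on $U$ is immediate from the lower bound, and openness of $V$ follows either from invariance of domain or, more concretely, from the observation that the images $\exp_p^{g_\eps}(U)$ all contain a fixed open neighborhood of $p$ (by the uniform lower Lipschitz constant). Hence $\exp_p^g\colon U \to V$ is a bi-Lipschitz homeomorphism, as required.
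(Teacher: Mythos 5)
Your overall strategy --- mollify $g$, establish uniform two-sided Lipschitz bounds for $\exp_p^{g_\eps}$ on a common ball, and pass to the limit --- is exactly the paper's, and your preparatory steps (uniform bounds on $\partial^2 g_\eps$, on the Christoffel symbols and on $R_\eps$, common domain via continuous dependence for ODEs) are all sound. The genuine gap sits precisely at the step you yourself flag as ``the principal obstacle'' and then outsource to a citation. Two distinct things are missing there. First, even granting uniform two-sided bounds on the differential, $e^{-c}\|w\|_E\le\|T_v\exp_p^{g_\eps}(w)\|_E\le e^{c}\|w\|_E$, this only makes each $\exp_p^{g_\eps}$ a local diffeomorphism with controlled local distortion; it yields neither global injectivity on a fixed ball nor the global estimate $\|\exp_p^{g_\eps}(u)-\exp_p^{g_\eps}(v)\|_E\ge c\|u-v\|_E$ (to obtain the latter one must invert the map and apply a mean-value estimate to the inverse on a possibly non-convex image, which presupposes injectivity). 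So your remark that injectivity ``is immediate from the lower bound'' is circular unless the global lower bound has already been established by other means. Second, the reference you lean on does not supply the missing injectivity: Chen--LeFloch prove injectivity radius bounds for \emph{Lorentzian} metrics under additional structural hypotheses, not for arbitrary pseudo-Riemannian signature, and what the present paper actually borrows from them is only the \emph{technique} of running Gronwall-type estimates on Jacobi fields measured against a fixed Euclidean background norm.

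The paper closes this gap as follows: after deriving the two-sided differential bounds from explicit Jacobi-field estimates (Lemma \ref{3.4} and the lemma following it), it establishes ball-inclusion relations (Lemma \ref{inclusions}) showing that $\exp_p^{g_\eps}$ restricted to a suitable closed ball is a surjective local homeomorphism between compact Hausdorff spaces, hence a covering map; uniform injectivity on a smaller ball then follows by lifting the straight-line fixed-endpoint homotopy between two radial geodesics meeting at a common point and deriving a contradiction (Lemma \ref{injlemma}). Only then is the global lower Lipschitz bound obtained, by applying a mean-value estimate for $C^1$ maps on non-convex domains (Lemma \ref{Kriegl}) to $(\exp_p^{g_\eps})^{-1}$. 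Your proposal would be complete if you supplied an argument of this kind, or some other uniform-in-$\eps$ injectivity argument valid in arbitrary signature; as written, the central claim is asserted rather than proved. Your concluding limit argument and the appeal to invariance of domain for the openness of $V$ are fine.
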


Since the result is local, we may assume $M=\R^{n}$  
and $p=0$\footnote{Nevertheless we will
write $p$ below to distinguish considerations in $T_pM$ from those in $M$.}. By $g_E$ or $\langle \,,\, \rangle_E$ we denote
the standard Euclidean metric on $\R^n$, and we write $\|\ \|_E$ for the corresponding standard
Euclidean norm, as well as for mapping norms induced by the Euclidean norm.

As was already indicated in the introduction, our strategy of proof will be to approximate $g$ 
by a net $g_\eps$ of smooth pseudo-Riemannian metrics and then use comparison results to control 
the relevant geometrical quantities derived from
the $g_\eps$ uniformly in $\eps$ so as to preserve the bi-Lipschitz property as $\eps\to 0$.

Thus take $\rho\in \mathscr{D}(\R^{n})$ with unit integral and define the standard mollifier
$\rho_{\eps}:=\eps^{-n}\rho\left (\frac{x}{\eps}\right)$ ($\eps>0$). 
We set $g_{\eps}:=g*\rho_{\eps}$ (componentwise convolution). 
\begin{remark}\label{convergence} For later reference, we note the following properties of the approximating
net $g_\eps$.
\begin{enumerate}
 \item[(i)] 
$g_\eps \to g$ in $C^1(M)$ and the second derivatives of $g_\eps$
 are bounded, uniformly in $\eps$, on compact sets.
\item[(ii)] On any compact subset of $M$, for $\eps$ sufficiently small the $g_\eps$ are a 
family of pseudo-Riemannian metrics of the same signature as $g$
whose Riemannian curvature tensors $R_\eps$ are bounded uniformly in $\eps$.
\end{enumerate}
\end{remark}
In order to proceed we need to determine a neighborhood of $0$ in $T_pM$ that is a 
common domain for all $\exp_{p}^{g_{\eps}}$ for $\eps$ sufficiently small.
Here, and in several places later on, we will make use of 
the following consequence of a standard result on the comparison
of solutions to ODE \cite[10.5.6, 10.5.6.1]{Die}:
\begin{Lemma}\label{dieudonne} 
Let $F,G\in C(H,X)$ where $H$ is a convex open subset of a Banach space $X$. Suppose:
\begin{equation*}
\sup_{x\in H}\|F(x)-G(x)\|\leq \alpha, 
\end{equation*}
$G$ is Lipschitz continuous on $H$ with Lipschitz constant $\le k$, and $F$ is locally Lipschitz on $H$.
For $\mu>0$, define
\begin{equation*}
\varphi(\xi):=\mu e^{\xi k}+\alpha(e^{\xi k}-1)\frac{1}{k},\ \xi\geq 0. 
\end{equation*}
Let $x_{0}\in H$, $t_0\in \R$ and let $u$ be a solution of $x'=G(x)$ with $u(t_{0})=x_{0}$ defined on $J:=(t_{0}-b,t_{0}+b)$ such that $\forall t\in J,\ \overline{B(u(t),\varphi(|t-t_{0}|))}\subseteq H$.
Then for every $\tilde{x}\in H$ with $\|\tilde{x}-x_{0}\|\leq \mu$ there exists a unique solution $v$ of $x'=F(x)$ with $v(t_{0})=\tilde{x}$ on $J$ with values in $H$. 
Moreover, $\|u(t)-v(t)\|\leq \varphi(|t-t_{0}|)$ for $t\in J$. 
\end{Lemma}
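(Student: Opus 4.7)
The plan is to combine local Picard--Lindel\"of existence for $v$ with a Gronwall-type comparison estimate that simultaneously bounds $\|u(t)-v(t)\|$ by $\varphi(|t-t_0|)$ and forces $v$ to persist on all of $J$. The key pointwise inequality that drives everything is
\begin{equation*}
\|G(u(s))-F(v(s))\| \le \|G(u(s))-G(v(s))\| + \|G(v(s))-F(v(s))\| \le k\,\|u(s)-v(s)\| + \alpha,
\end{equation*}
obtained from the Lipschitz bound on $G$ and the uniform estimate $\|F-G\|\le \alpha$.

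First, since $F$ is locally Lipschitz on $H$ and $\tilde{x}\in H$, classical ODE theory yields a unique maximal solution $v$ of $x'=F(x)$ with $v(t_0)=\tilde{x}$, defined on some open subinterval $J^*\subseteq J$ containing $t_0$ and taking values in $H$. For $t\in J^*$ with $t\ge t_0$, I would write
\begin{equation*}
u(t)-v(t) = (x_0-\tilde{x}) + \int_{t_0}^{t}\bigl(G(u(s))-F(v(s))\bigr)\,ds,
\end{equation*}
set $w(t):=\|u(t)-v(t)\|$, and use the pointwise inequality above to obtain
\begin{equation*}
w(t) \le \mu + \int_{t_0}^{t}\bigl(k\,w(s)+\alpha\bigr)\,ds,
\end{equation*}
with a symmetric statement for $t\le t_0$. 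A direct application of Gronwall's lemma then gives precisely $w(t)\le \mu e^{k|t-t_0|}+\frac{\alpha}{k}(e^{k|t-t_0|}-1)=\varphi(|t-t_0|)$.

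The main obstacle, and the reason the tube-containment hypothesis is imposed, is the continuation step: showing $J^*=J$. Suppose an endpoint $t_0+b^+$ of $J^*$ lies strictly inside $J$. The a priori estimate just derived forces $v(t)\in\overline{B(u(t),\varphi(|t-t_0|))}\subseteq H$ throughout $J^*$, so $v$ stays uniformly bounded away from $\partial H$ inside a set on which $F$ is locally Lipschitz, hence locally bounded. This gives a uniform bound on $v'=F\circ v$, so $v$ extends continuously to $t_0+b^+$ with limit still in $H$; local existence then continues $v$ beyond this endpoint, contradicting maximality. Thus $J^*=J$, and uniqueness of $v$ on all of $J$ is immediate from local uniqueness and the connectedness of $J$.
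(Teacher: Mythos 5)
Your proposal is correct, and it is worth noting that the paper does not actually prove this lemma: it is quoted as a consequence of Dieudonn\'e's comparison theorem for approximate solutions of ODEs \cite[10.5.6, 10.5.6.1]{Die}, so your argument amounts to a self-contained proof of exactly that cited result, by exactly the standard route (local Picard--Lindel\"of, the splitting $\|G(u)-F(v)\|\le k\|u-v\|+\alpha$, Gronwall giving $\varphi$, and continuation via the tube hypothesis). One small repair is needed in the continuation step: since $X$ is an arbitrary Banach space, the set in which $v$ is trapped need not be compact, so ``locally Lipschitz, hence locally bounded'' does not by itself yield a \emph{uniform} bound on $v'=F\circ v$ near the endpoint, nor is $v$ automatically ``uniformly bounded away from $\partial H$''. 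The clean fix is to estimate $\|F(v(s))\|\le\|G(u(s))\|+k\|v(s)-u(s)\|+\alpha\le\|u'(s)\|+k\varphi(b)+\alpha$, which is bounded on compact subintervals of $J$ because $u$ is $C^1$ there; completeness of $X$ then lets $v$ extend continuously to the endpoint, the tube hypothesis $\overline{B(u(t),\varphi(|t-t_0|))}\subseteq H$ guarantees the limit lies in $H$, and local existence restarts the solution, contradicting maximality. In the paper's application $X=\R^{2n}$, so compactness makes your original phrasing literally correct there; everything else (the Gronwall computation yielding precisely $\varphi$, and uniqueness from local uniqueness plus connectedness of $J$) is fine as written.
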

We rewrite the geodesic equation for the metric $g$ as a first order system:
\begin{equation}\label{geodesic_equation}
\begin{aligned}
\frac{dc^{k}}{dt}&=y^{k}(t) \\
\frac{dy^{k}}{dt}&=-\Gamma_{g,ij}^{k}(c(t))y^{i}(t)y^{j}(t) 
\end{aligned}
\end{equation}
and analogously for the metrics $g_{\eps}$. Hence, $\exp_{p}^{g}(v)=c(1)$ where $c(0)=p,\ y(0)=v$. Let $t_{0}=0$ and $x_{0}=(p,0)$.
We fix $b>1$ and set $J=(-b,b)$. Now take $u$ to be the constant solution of \eqref{geodesic_equation} with initial condition $x_{0}=(p,0)$, let $\delta>0$ and set $H:=B(x_{0},2\delta)\subseteq \R^{2n}$.
The Christoffel symbols $\Gamma_{g}$ are Lipschitz functions on $H$, and by Remark \ref{convergence} (i)
it follows that there is a common Lipschitz constant
$k$ for $\Gamma_g$ and the $\Gamma_{g_{\eps}}$ on $H$. Choose $\alpha>0,\ \mu>0$ such that
\begin{equation*}
\varphi(b)=\mu e^{bk}+\frac{\alpha}{k}(e^{bk}-1)<\delta. 
\end{equation*}
and choose $\eps_{0}>0$ such that $\forall \eps<\eps_{0}$ we have
$\sup_{H}\| \Gamma_{g}-\Gamma_{g_{\eps}} \| \leq \alpha$. 
Then $\overline{B(u(t),\varphi(|t|))}\subseteq H,\ \forall t\in J$. 
By Lemma \ref{dieudonne}, for all $\tilde{x}=(p,w)\in H$ with $\|\tilde{x}-x_{0}\|=\|w\|\leq \mu$, there exists a unique
solution $u_{\eps}$ on $(-b,b)$ of
\begin{align*}
 \frac{dc^{k}}{dt}&=y^{k}(t) \\
\frac{dy^{k}}{dt}&=-\Gamma_{g_{\eps},ij}^{k}(c(t))y^{i}(t)y^{j}(t), 
\end{align*}
with values in $H$ and $u_{\eps}(0)=\tilde{x}=(p,w)$, as well as a unique solution to \eqref{geodesic_equation}
with these initial conditions. 
Therefore a common domain of $\exp_p^g$ and all $\exp_{p}^{g_{\eps}}$ ($\eps<\eps_0$) is given by
$\{w\in \R^{n}|\ \|w\|_E<\mu \}=:B_E(0,\mu)$.

\begin{remark}\label{estimates} From Remark \ref{convergence} we obtain that for some $\eps_0>0$ we have:
\begin{enumerate}
 \item[(i)] 
There exists a constant $K_1>0$ such that, for $\eps<\eps_0$,
$\|R_\eps\|_{E}\leq  K_1$ uniformly on $B_E(0,\mu)$.
\item[(ii)] For some $K_2>0$ and $\eps<\eps_0$, 
\begin{equation*}
\|\Gamma_{g_{\eps}}\|_E \leq K_2,
\end{equation*}
uniformly on $B_E(0,\mu)$.
\end{enumerate}
\end{remark}

\begin{Lemma}\label{3.2}
Let $r_1 < \min\left(\frac{1}{2 K_2},\frac{1}{2}\mu\right)$. Then for all $\eps<\eps_0$, 
$$
\exp_p^{g_\eps}(\overline{B_E(0,r_1)}) \subseteq B_E(p,\mu).
$$
\end{Lemma}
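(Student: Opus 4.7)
The plan is to run a continuation argument on each $g_\eps$-geodesic issuing from $p$. Fix $v\in\overline{B_E(0,r_1)}$ and let $c_\eps(t):=\exp_p^{g_\eps}(tv)$, $y_\eps(t):=\dot c_\eps(t)$, so that $y_\eps(0)=v$ with $\|v\|_E\le r_1$; the goal is to show $c_\eps(1)\in B_E(p,\mu)$. Set
\[T:=\sup\{\tau\in[0,1]:c_\eps([0,\tau])\subseteq\overline{B_E(p,\mu)}\},\]
which is positive by $c_\eps(0)=p$ and continuity. Since $M=\R^n$ and $p=0$, the ball $B_E(p,\mu)$ coincides with the $B_E(0,\mu)$ of Remark \ref{estimates}, so on $[0,T]$ the Christoffel estimate (ii) combined with the second line of \eqref{geodesic_equation} yields $\|\dot y_\eps(t)\|_E\le K_2\|y_\eps(t)\|_E^2$.

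From this, $\|y_\eps\|_E$ satisfies the differential inequality $\frac{d}{dt}\|y_\eps(t)\|_E\le K_2\|y_\eps(t)\|_E^2$ on $[0,T]$ (the case $v=0$ being trivial). Comparing with $u'=K_2u^2$, $u(0)=r_1$, whose explicit solution $u(t)=r_1/(1-K_2r_1t)$ stays below $2r_1$ on $[0,1]$ thanks to $r_1<1/(2K_2)$, I conclude $\|y_\eps(t)\|_E<2r_1$ on $[0,T]$. Integrating,
\[\|c_\eps(t)-p\|_E\le\int_0^t\|y_\eps(s)\|_E\,ds<2r_1t\le 2r_1<\mu\]
for all $t\in[0,T]$, where the last inequality uses $r_1<\mu/2$.

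The strict bound together with continuity of $c_\eps$ forces $T=1$: otherwise $c_\eps(T)$ would lie in the open ball $B_E(p,\mu)$, and $c_\eps$ would remain inside this open ball on a slightly larger interval, contradicting the maximality of $T$. Hence $c_\eps(1)\in B_E(p,\mu)$, which establishes the lemma.

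I expect the main subtlety to be the nonlinear character of the comparison ODE $u'=K_2u^2$, which blows up in finite time; the quantitative choice $r_1<1/(2K_2)$ is precisely the threshold ensuring the blow-up time exceeds $1$ so that the velocity remains uniformly controlled up to $t=1$. The second bound $r_1<\mu/2$ plays a complementary role, keeping the spatial displacement well inside the ball on which the Christoffel estimate is available, which is what closes the continuation step.
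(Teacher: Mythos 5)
Your proof is correct and takes essentially the same route as the paper: a continuation argument in which the Christoffel bound $K_2$ yields a Riccati-type differential inequality for the Euclidean speed (the paper writes the equivalent form $\bigl|\frac{d}{ds}\|\gamma'(s)\|_E^{-1}\bigr|\le K_2$ for the unit-speed parametrization on $[0,r_1]$), with $r_1<\frac{1}{2K_2}$ ensuring the speed at most doubles and $r_1<\frac{\mu}{2}$ bounding the total displacement by $\mu$. The only differences are the choice of parametrization and the cosmetic rewriting of the same estimate.
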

\begin{proof}
Let $\gamma:[0,r_1]\rightarrow M$ be a $g_\eps$-geodesic with $\gamma(0)=p$ and $\|\gamma'(0)\|_E = 1$
and set $s_0:=\sup\{s\in [0,r_1] \mid \gamma|_{[0,s]}\subseteq B_E(p,\mu)\}$. Then $s_0>0$
and 
for $s\in [0,s_0)$ we have
\begin{align*}
\left |\frac{d}{ds}\langle \gamma'(s),\gamma'(s)\rangle_E \right |&
=2\left |\langle \Gamma_{g_\eps}(\gamma'(s),\gamma'(s)),\gamma'(s)\rangle_E\right| \le 2K_2\|\gamma'(s)\|^{3}_E,
\end{align*}
and therefore 
$
\left |\frac{d}{ds}\|\gamma'(s)\|^{-1}_E\right |\leq K_2.
$
From this, setting $f(s):=\|\gamma'(s)\|_E$, for $s\in [0,s_0)$ we obtain
\begin{equation*}
\frac{f(0)}{f(s)}=\int_{0}^{s}f(0)\frac{d}{d\tau}\left (\frac{1}{f(\tau)}\right )d\tau+1 \in [1/2,3/2],
\end{equation*}
so
\begin{equation}\label{10}
\frac{1}{2}\|\gamma'(0)\|_E\leq  \|\gamma'(s)\|_E\leq 2\|\gamma'(0)\|_E \qquad (s\in [0,s_0)).
\end{equation}
Therefore, 
$$
L_E(\gamma|_{[0,s_0]}) = \int_{0}^{s_0}\|\gamma'(s)\|_{E}\,ds\leq 2r_1 \|\gamma'(0)\|_{E} < \mu,
$$
implying that $s_0=r_1$.
\end{proof}

We next want to determine a ball around $0\in T_pM$ on which each 
$\exp_{p}^{g_{\eps}}$ is a local
diffeomorphism. 
To achieve this, we first need to derive 
estimates on Jacobi fields along geodesics, based on \cite[Sec.\ 4]{Chen}. 

\begin{Lemma}\label{3.4} Set $C_1:=2K_2$, $C_2:=4K_1$ and let 
$$
r_2 < 
\min\left(r_1,\frac{1}{C_1}\log\left(\frac{C_1+C_2}{C_1/2 + C_2}\right),(2+C_1)^{-1}\right).
$$
Then for $\eps<\eps_0$, any $g_\eps$-geodesic $\gamma:[0,r_{2}]\rightarrow M$ with $\gamma(0)=p$ and $\|\gamma'(0)\|_{E}=1$
lies entirely in $B_{E}(p,\mu)$. Moreover, if $J$ is a $g_\eps$-Jacobi field along $\gamma$
with $J(0)=0$ and $\|\nabla_{g_\eps,\gamma'}J(0)\|_E=1$ then 
$\|J(s)\|_E\le 1$ and $\frac{1}{2}\leq \|\nabla_{g_\eps,\gamma'}J\|_E\leq 2$ for all $s\in [0,r_2]$.
\end{Lemma}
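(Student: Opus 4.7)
The first claim---that any such $g_\eps$-geodesic remains in $B_E(p,\mu)$---follows immediately from Lemma \ref{3.2}, since $r_2 < r_1$, and the proof of that lemma also supplies the speed estimate $\|\gamma'(s)\|_E \le 2$ on $[0,r_2]$, which I will use throughout.

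For the Jacobi field estimates, my plan is to reduce everything to a pair of coupled Gronwall-type inequalities in the Euclidean norms $u(s) := \|J(s)\|_E$ and $v(s) := \|\nabla_{g_\eps,\gamma'}J(s)\|_E$. Writing the covariant derivatives in coordinates, $\frac{dJ}{ds} = \nabla_{\gamma'}J - \Gamma_{g_\eps}(\gamma',J)$, and analogously for $\nabla_{\gamma'}J$ in place of $J$, while $\nabla^2_{\gamma'}J = -R_\eps(J,\gamma')\gamma'$ by the Jacobi equation. Taking Euclidean norms and using $\|\Gamma_{g_\eps}\|_E \le K_2$, $\|R_\eps\|_E \le K_1$ from Remark \ref{estimates} together with $\|\gamma'\|_E \le 2$, this produces
\begin{equation*}
|u'(s)| \le C_1 u(s) + v(s), \qquad |v'(s)| \le C_2 u(s) + C_1 v(s),
\end{equation*}
with initial data $u(0) = 0$ and $v(0) = 1$.

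The conclusion should then fall out of a standard bootstrap. I would set
\[ s^{*} := \sup\bigl\{s \in [0,r_2] : u(\tau) \le 1 \text{ and } \tfrac{1}{2} \le v(\tau) \le 2 \text{ for all } \tau \in [0,s]\bigr\}, \]
observe that $s^{*} > 0$ by continuity, and then verify $s^{*} = r_2$ by showing that on $[0,s^{*}]$ each of the three bounds defining $s^{*}$ is in fact strict at $s = s^{*}$. Feeding $u \le 1$ and $v \le 2$ back into the inequalities above and integrating yields
\begin{equation*}
u(s) \le \tfrac{2}{C_1}(e^{C_1 s}-1), \qquad e^{-C_1 s}\bigl(1+\tfrac{C_2}{C_1}\bigr) - \tfrac{C_2}{C_1} \le v(s) \le e^{C_1 s}\bigl(1+\tfrac{C_2}{C_1}\bigr) - \tfrac{C_2}{C_1}.
\end{equation*}
Each of the three conditions imposed on $r_2$ is exactly what is needed to turn one of these into a strict inequality at $s = s^{*}$: the bound $r_2 < (2+C_1)^{-1}$, combined with the elementary estimate $e^{x} \le 1 + x/(1-x)$ ($x<1$), forces $u(s^{*}) < 1$; the logarithmic bound $r_2 < C_1^{-1}\log\bigl(\tfrac{C_1+C_2}{C_1/2+C_2}\bigr)$ forces $v(s^{*}) > \tfrac{1}{2}$; and a short comparison of fractions shows that the same logarithmic bound also guarantees $v(s^{*}) < 2$. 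This contradicts $s^{*} < r_2$, so $s^{*} = r_2$.

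I expect the only technical subtlety to be bookkeeping---lining up the three numerical thresholds on $r_2$ with the three inequalities that must be preserved. The geometric content is otherwise entirely captured by the uniform bounds on $R_\eps$ and $\Gamma_{g_\eps}$ from Remark \ref{estimates} and the speed bound obtained en route to Lemma \ref{3.2}.
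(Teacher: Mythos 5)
Your proposal is correct and follows essentially the same route as the paper: the same differential inequalities $|\frac{d}{ds}\|\nabla_{g_\eps,\gamma'}J\|_E|\le C_1\|\nabla_{g_\eps,\gamma'}J\|_E+C_2$ (after feeding in $\|J\|_E\le 1$) and $|\frac{d}{ds}\|J\|_E|\le \|\nabla_{g_\eps,\gamma'}J\|_E+C_1\|J\|_E$, with each of the three thresholds on $r_2$ playing exactly the role you assign to it. The only cosmetic difference is that you bootstrap all three bounds simultaneously, whereas the paper runs the continuity argument on $\|J\|_E\le 1$ alone and obtains the two-sided bound on $\|\nabla_{g_\eps,\gamma'}J\|_E$ as a consequence on $[0,s_0]$; both versions close in the same way.
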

\begin{proof} 
By Lemma \ref{3.2}, $\gamma$ lies in $B_{E}(p,\mu)$. Also, \eqref{10} implies
\begin{equation}\label{gammaest}
\max_{s\in[0,r_{2}]}\|\gamma'(s)\|_E \le 2.
\end{equation}
Suppose that $s_0:=\sup\{s\in [0,r_2] \mid \|J(t)\|_E\leq 1 \ \forall t\in [0,s]\} < r_2$.
By assumption, $J$ satisfies
\begin{align*}
\nabla_{g_{\eps},\gamma'}\nabla_{g_\eps,\gamma'} J(s)&=-R_\eps(J(s),\gamma'(s))\gamma'(s)\\
J(0)&=0,\ \ \|J'(0)\|_E=1. 
\end{align*}
Thus by Remark \ref{estimates} and \eqref{gammaest}, on $[0,s_0]$ we obtain
\begin{align*}
\left |\frac{d}{ds}\langle \nabla_{g_\eps,\gamma'}J, \nabla_{g_\eps,\gamma'}J\rangle _E\right |&=
2|\langle \nabla_{g_{\eps},\gamma'}\nabla_{g_\eps,\gamma'}J, \nabla_{g_\eps,\gamma'}J\rangle _E
-\langle \Gamma_{g_\eps}(\nabla_{g_\eps,\gamma'}J,\gamma'),\nabla_{g_\eps,\gamma'}J\rangle_E|\\
&\leq 8K_1\|\nabla_{g_\eps,\gamma'}J\|_E +
4K_2\|\nabla_{g_\eps,\gamma'}J\|_E^{2},
\end{align*}
so that 
\begin{equation}\label{12}
\left |\frac{d}{ds}\|\nabla_{g_\eps,\gamma'}J\|_E\right |\leq
 4K_1 + 2K_2\|\nabla_{g_\eps,\gamma'}J\|_E
=C_1\|\nabla_{g_\eps,\gamma'}J\|_E+C_2.
\end{equation}
Taking into account that $\|\nabla_{g_\eps,\gamma'}J(0)\|_E = 1$ by assumption, integration of
\eqref{12} leads to
\begin{equation*}
-\frac{C_{2}}{C_{1}}+\left (1+\frac{C_{2}}{C_{1}}\right )e^{-C_{1}s}\leq \|\nabla_{g_\eps,\gamma'}J(s)\|_E\leq -\frac{C_{2}}{C_{1}}+\left (1+\frac{C_{2}}{C_{1}}\right )e^{C_{1}s}.
\end{equation*}
Due to our choice of $r_2$, this entails 
\begin{equation}\label{below}
\frac{1}{2}\leq \|\nabla_{g_\eps,\gamma'}J\|_E\leq 2
\end{equation} 
on $[0,s_0]$.
From this, we get
\begin{equation*}
\left|\frac{d}{ds}\|J(s)\|_E \right|
=\frac{1}{\|J(s)\|_E} \left|\langle \nabla_{g_{\eps},\gamma'} J(s),J(s) \rangle_E - 
\langle \Gamma_{g_\eps}(J(s),\gamma'(s)),J(s)\rangle_E \right|
\leq 2+2K_2.
\end{equation*}
Therefore,
\begin{equation}\label{Jest}
\|J(s)\|_E\leq (2+2K_2)s < s/r_2 < 1
\end{equation}
for $s\in [0,s_0]$. For $s=s_0$, this gives a contradiction to the definition of $s_0$.
\end{proof}

\begin{Lemma} There exists some $0<r_3<r_2$ such that, for all $\eps<\eps_0$,
$\exp_p^{g_\eps}$ is a local diffeomorphism on $B_E(0,r_3)$.
\end{Lemma}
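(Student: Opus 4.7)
The plan is to apply the smooth inverse function theorem. Since each $g_\eps$ is smooth, so is $\exp_p^{g_\eps}$, and it suffices to produce some $r_3\in(0,r_2)$, independent of $\eps<\eps_0$, such that $d_v\exp_p^{g_\eps}$ is invertible for every $v\in B_E(0,r_3)$. At $v=0$ this is automatic, since $d_0\exp_p^{g_\eps}=\mathrm{id}$. For $v\ne 0$ we use the standard identification
$$d_v\exp_p^{g_\eps}(w)=J(1),$$
where $J$ is the $g_\eps$-Jacobi field along $\gamma(t)=\exp_p^{g_\eps}(tv)$ with $J(0)=0$ and $\nabla_{g_\eps,\gamma'}J(0)=w$. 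So everything reduces to showing, uniformly in $\eps$, that $J(1)\ne 0$ whenever $w\ne 0$.

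Next we rescale to unit Euclidean speed. Setting $a:=\|v\|_E$ and $c(s):=\gamma(s/a)$, the curve $c$ is a $g_\eps$-geodesic defined on $[0,a]$ with $\|c'(0)\|_E=1$, and affine invariance of the Jacobi equation shows that $\tilde J(s):=J(s/a)$ is again a Jacobi field along $c$, satisfying $\tilde J(0)=0$, $\nabla_{g_\eps,c'}\tilde J(0)=w/a$, and $\tilde J(a)=J(1)$. Provided $r_3\le r_2$, this places us in the setting of Lemma \ref{3.4}. By linearity of the Jacobi equation it then suffices to establish, uniformly in $\eps$, a lower bound of the form
$$\|\tilde J(s)\|_E\ge \tfrac{s}{2}\,\|\nabla_{g_\eps,c'}\tilde J(0)\|_E\qquad (s\in[0,r_3])$$
for every unit-speed $g_\eps$-geodesic $c$ and every Jacobi field $\tilde J$ along $c$ with $\tilde J(0)=0$; substituting $s=a$ then gives $\|d_v\exp_p^{g_\eps}(w)\|_E\ge \|w\|_E/2$, hence injectivity and therefore (in finite dimensions) invertibility of $d_v\exp_p^{g_\eps}$.

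For the lower bound we would use a Taylor-type estimate of $\tilde J$ near $s=0$. In Euclidean coordinates $\nabla_{g_\eps,c'}\tilde J=\tilde J'+\Gamma_{g_\eps}(c',\tilde J)$, so $\tilde J'(0)=\nabla_{g_\eps,c'}\tilde J(0)$. Normalising $\|\nabla_{g_\eps,c'}\tilde J(0)\|_E=1$ by linearity, Lemma \ref{3.4} (together with inequality \eqref{Jest} in its proof) supplies $\|\tilde J(s)\|_E\le(2+2K_2)s$ and $\|\nabla_{g_\eps,c'}\tilde J\|_E\le 2$ on $[0,r_2]$, while Remark \ref{estimates} provides uniform bounds on $R_\eps$ and $\Gamma_{g_\eps}$. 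Differentiating $\nabla_{g_\eps,c'}\tilde J$ once more and using the Jacobi equation $\nabla_{g_\eps,c'}\nabla_{g_\eps,c'}\tilde J=-R_\eps(\tilde J,c')c'$ yields a uniform bound on $\|(\nabla_{g_\eps,c'}\tilde J)'(s)\|_E$, whence $\|\nabla_{g_\eps,c'}\tilde J(s)-\tilde J'(0)\|_E\le Ms$. Combined with $\|\Gamma_{g_\eps}(c'(s),\tilde J(s))\|_E\le 2K_2(2+2K_2)s$, this gives $\|\tilde J'(s)-\tilde J'(0)\|_E\le C_5 s$ for a uniform constant $C_5$, and integration yields $\|\tilde J(s)-s\tilde J'(0)\|_E\le C_5 s^2/2$. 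The reverse triangle inequality then produces $\|\tilde J(s)\|_E\ge s(1-C_5 s/2)\ge s/2$ whenever $s\le 1/C_5$, so that the choice $r_3:=\min(r_2,1/C_5)$ works.

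The main obstacle is this lower bound itself, since Lemma \ref{3.4} only supplies upper estimates on $\|\tilde J\|_E$; a separate Taylor argument near $s=0$ is therefore essential. Its whole content is that every constant entering the expansion is controlled uniformly in $\eps$ by Remark \ref{estimates} and Lemma \ref{3.4}, which is precisely what permits $r_3$ to be chosen independently of $\eps$.
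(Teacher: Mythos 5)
Your argument is correct, and it reaches the stated conclusion by a genuinely different route than the paper for the key step, namely the lower bound on the Jacobi field. The paper follows the Chen--LeFloch scheme: it differentiates $\langle \nabla_{g_\eps,\gamma'}J,J\rangle_E$, splits the derivative into four terms, uses \eqref{gammaest}, \eqref{below} and \eqref{Jest} to show that for $s$ small the positive term $\langle \nabla_{g_\eps,\gamma'}J,\nabla_{g_\eps,\gamma'}J\rangle_E\ge 1/4$ dominates the others, integrates to get $e^{-c_1}s\le \langle \nabla_{g_\eps,\gamma'}J,J\rangle_E(s)$, and then divides by $\|\nabla_{g_\eps,\gamma'}J\|_E\le 2$ to obtain $\|J(s)\|_E\ge \frac{e^{-c_1}}{2}s$. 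You instead perform a second-order Taylor expansion of $J$ at $s=0$: using the Jacobi equation together with the uniform bounds of Remark \ref{estimates} and the a priori estimates of Lemma \ref{3.4} you control $\|\tilde J'(s)-\tilde J'(0)\|_E\le C_5 s$ with $C_5$ independent of $\eps$, integrate to get $\|\tilde J(s)-s\tilde J'(0)\|_E\le C_5 s^2/2$, and conclude $\|\tilde J(s)\|_E\ge s/2$ for $s\le 1/C_5$. Both arguments lean on the same input (Lemma \ref{3.4} and Remark \ref{estimates}); yours is somewhat more elementary and makes the uniformity in $\eps$ very transparent, while the paper's computation has the advantage of directly producing the two-sided estimate \eqref{LRauch}, which is reused in Lemma \ref{inclusions} and in the final bi-Lipschitz bounds. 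Note that your expansion in fact also yields the matching upper bound $\|\tilde J(s)\|_E\le s(1+C_5 s/2)$, so an analogue of \eqref{LRauch} would still be available downstream; you just do not state it. Two trivial points of hygiene: take $r_3$ strictly less than $\min(r_2,1/C_5)$ to match the statement $r_3<r_2$, and observe (as you implicitly do) that the unit-speed geodesic $c(s)=\exp_p^{g_\eps}(s\,v/\|v\|_E)$ extends to all of $[0,r_2]$ so that Lemma \ref{3.4} applies verbatim.
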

\begin{proof}
For any Jacobi field $J$ as in Lemma \ref{3.4} we have:
\begin{align*}
\frac{d}{ds}\langle \nabla_{g_\eps,\gamma'}J,J\rangle_E&=
\langle \nabla_{g_\eps,\gamma'}\nabla_{g_\eps,\gamma'}J,J\rangle_E
 - \langle \Gamma_{g_\eps}(\nabla_{g_\eps,\gamma'}J,\gamma'),J\rangle_E\\
&+\langle \nabla_{g_\eps,\gamma'}J,\nabla_{g_\eps,\gamma'}J\rangle_E
- \langle \nabla_{g_\eps,\gamma'}J,\Gamma_{g_\eps}(J,\gamma')\rangle_E
\end{align*}
Of these four terms, the third one is bounded from below by $1/4$ due to \eqref{below}.
For the others, employing Lemma \ref{3.4} (see \eqref{gammaest}, \eqref{below}, \eqref{Jest}), we obtain for
$s\in [0,r_2]$:
\begin{align*}
&|\langle \nabla_{g_\eps,\gamma'}\nabla_{g_\eps,\gamma'}J,J\rangle_E(s)|=|\langle R_\eps(J,\gamma')\gamma',J\rangle_E(s)|\leq K_1\|\gamma'(s)\|_E^{2}\|J(s)\|_E^2
\leq 4\frac{K_1}{r_2^2}  s^2 \\
&|\langle \Gamma_{g_\eps}(\nabla_{g_\eps,\gamma'}J,\gamma'),J\rangle_E(s)| \leq K_2\|\gamma'(s)\|_E\|\nabla_{g_\eps,\gamma'}J(s)\|_E\|J(s)\|_E\leq 4\frac{K_2}{r_2}s\\
&|\langle \nabla_{g_\eps,\gamma'}J,\Gamma_{g_\eps}(J,\gamma')\rangle_E(s)|\leq K_2\|\gamma'(s)\|_E\|J(s)\|_E\|\nabla_{g_\eps,\gamma'}J(s)\|_E\leq 4\frac{K_2}{r_2} s
\end{align*}
From this we obtain an $r_3 = r_3(r_2,K_1,K_2)<r_2$ such that on $[0,r_3]$, 
$\frac{d}{ds}\langle \nabla_{g_\eps,\gamma'}J,J\rangle_E$ is bounded from below by a positive constant.
By the same estimates and \eqref{below} again, it is also bounded from above. Hence for some $c_1>0$,
any $\eps<\eps_0$ and $s\in [0,r_3]$ we obtain:
\begin{equation*}
e^{-c_{1}}\leq \frac{d}{ds}\langle \nabla_{g_\eps,\gamma'}J,J\rangle_E(s)\leq e^{c_{1}}, 
\end{equation*}
and therefore
\begin{equation*}
e^{-c_{1}}s\leq \langle \nabla_{g_\eps,\gamma'}J,J\rangle_E(s)\leq e^{c_{1}}s.
\end{equation*}
Combined with \eqref{below} and \eqref{Jest}, this entails:
\begin{equation*}
\frac{1}{r_2}s \ge
\|J(s)\|_E \geq \frac{\langle \nabla_{g_\eps,\gamma'}J,J\rangle_E(s)}{\|\nabla_{g_\eps,\gamma'}J(s)\|_E}\geq \frac{e^{-c_1}}{2}s.
\end{equation*}
Altogether, we find $c_2>0$ such that for all $\eps<\eps_0$ and $s\in[0,r_3]$:
\begin{equation*}
 e^{-c_2}s\leq \|J(s)\|_E \leq e^{c_2}s. 
\end{equation*}
In terms of the exponential map, any Jacobi field as in Lemma \ref{3.4} is of the form
$J(s)=T_{s\gamma'(0)}\exp_{p}^{g_{\eps}}(s\cdot w)$, with $w\in T_pM$, $\|w\|_E=1$.
Thus
\begin{align*}
 e^{-c_2}&\leq \|T_{s\gamma'(0)}\exp_{p}^{g_{\eps}}(w)\|_E\leq e^{c_2} \qquad (s\in [0,r_3]).
\end{align*}
Since $\|\gamma'(0)\|_E=1$ we conclude that $\forall \eps<\eps_0$, $\forall v\in B_E(0,r_3)$,
$\forall w\in T_pM$:
\begin{equation}\label{LRauch}
e^{-c_2}\|w\|_E\leq \|T_{v}\exp_{p}^{g_{\eps}}(w)\|_E\leq 
e^{c_2}\|w\|_E.
\end{equation}
In particular, $\exp_p^{g_{\eps}}$ is a local diffeomorphism on $B_E(0,r_3)$.
\end{proof}

We note that \eqref{LRauch} can equivalently be formulated as
\begin{equation}\label{final}
e^{-2c_2}g_E\leq (\exp_{p}^{g_{\eps}})^{\ast}g_E\leq e^{2c_2}g_E 
\end{equation}
for $\eps<\eps_0$ on $B_E(0,r_3)$. 

\begin{Lemma} \label{inclusions}
For $r_{4}<e^{-c_2}r_3$, $r_{5}< e^{-c_2}r_{4}$ and
$\tilde r := e^{c_2}r_{4}$ we have, $\forall \eps < \eps_0$:
\begin{equation*}
 \exp_{p}^{g_{\eps}}(\overline{B_E(0,r_{5})})\subseteq B_E(p,r_{4})\subseteq 
 \exp_{p}^{g_{\eps}}(\overline{B_E(0,\tilde r)})
 \subseteq\exp_{p}^{g_{\eps}}(B_E(0,r_3)).
\end{equation*}
\end{Lemma}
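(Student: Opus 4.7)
The key input is the pointwise bi-Lipschitz estimate \eqref{final}, or equivalently \eqref{LRauch}: on $B_E(0,r_3)$ the map $\exp_p^{g_\eps}$ is a local diffeomorphism with $e^{-c_2}\|w\|_E \le \|T_v\exp_p^{g_\eps}(w)\|_E \le e^{c_2}\|w\|_E$. Since $B_E(0,r_3)$ is convex, integrating the upper bound along the straight segment from $0$ to $v$ shows that $\exp_p^{g_\eps}$ is globally $e^{c_2}$-Lipschitz on $B_E(0,r_3)$. With this in hand the outer two inclusions are immediate: for $v\in\overline{B_E(0,r_5)}$ one has $\|\exp_p^{g_\eps}(v)-p\|_E\le e^{c_2}\|v\|_E\le e^{c_2}r_5 < r_4$ by the choice of $r_5$, and $\overline{B_E(0,\tilde r)}\subset B_E(0,r_3)$ because $\tilde r=e^{c_2}r_4<r_3$ by the choice of $r_4$.

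The content of the lemma lies in the middle inclusion $B_E(p,r_4)\subseteq \exp_p^{g_\eps}(\overline{B_E(0,\tilde r)})$, and I plan to establish it by a path-lifting argument. Fix $q\in B_E(p,r_4)$ and consider the Euclidean segment $\sigma(t)=p+t(q-p)$, $t\in[0,1]$. Since $\exp_p^{g_\eps}$ is a local diffeomorphism at $0$, a lift $\tilde\sigma:[0,\delta]\to B_E(0,r_3)$ with $\tilde\sigma(0)=0$ and $\exp_p^{g_\eps}\circ\tilde\sigma=\sigma$ exists for some small $\delta>0$, and local uniqueness of lifts lets one define $T\in(0,1]$ as the supremum of all $t$ for which a continuous lift into $B_E(0,r_3)$ exists on $[0,t]$. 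Differentiating the lift relation and using the lower bound in \eqref{LRauch} gives $\|\tilde\sigma'(t)\|_E\le e^{c_2}\|q-p\|_E < e^{c_2}r_4=\tilde r$, so $\|\tilde\sigma(t)\|_E<t\tilde r\le \tilde r$ for all $t$ in the domain of the lift.

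Consequently the lift takes values in the compact set $\overline{B_E(0,\tilde r)}\subset B_E(0,r_3)$ and is uniformly Lipschitz in $t$, which lets it extend continuously to $t=T$. If $T<1$, the local diffeomorphism property at the interior point $\tilde\sigma(T)\in B_E(0,r_3)$ allows continuation strictly beyond $T$, contradicting maximality; hence $T=1$ and $\tilde\sigma(1)\in\overline{B_E(0,\tilde r)}$ is a preimage of $q$. The main obstacle is precisely this extension step: one must argue that the lift neither escapes $B_E(0,r_3)$ nor accumulates on its boundary before reaching $t=1$, and that the local (rather than global) nature of the diffeomorphism property does not obstruct unique continuation. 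Both issues are resolved by the uniform bi-Lipschitz bound \eqref{final}, and the chain of constants $r_5<e^{-c_2}r_4<e^{-2c_2}r_3$ together with $\tilde r=e^{c_2}r_4$ is calibrated precisely so that the nested Euclidean balls fit the claimed inclusions.
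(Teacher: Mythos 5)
Your proof is correct and takes essentially the same approach as the paper: both establish the middle inclusion by lifting a path from $p$ to $q$ of Euclidean length less than $r_4$ through the local diffeomorphism $\exp_p^{g_\eps}$, using the uniform two-sided derivative bound \eqref{LRauch} to confine the lift to $\overline{B_E(0,\tilde r)}$ and a maximality/continuation argument to reach $t=1$, while the outer inclusions follow from the same bound by integration along radial directions. The only differences are cosmetic (you bound $\|\tilde\sigma'\|_E$ directly and use the resulting Lipschitz estimate to extend the lift to the supremal parameter, where the paper compares lengths in the pullback metric and extracts a convergent subsequence).
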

\begin{proof}
For $q\in B_{E}(p,r_4)$, let $\alpha:[0,a]\rightarrow M$ be a piecewise smooth curve from
$p$ to $q$ in $B_{E}(p,r_4)$ of Euclidean length less than $r_4$. Since $\exp_{p}^{g_{\eps}}$ is a local diffeomorphism 
on $B_E(0,r_3)$, for $b>0$ sufficiently small there exists a unique $\exp_{p}^{g_{\eps}}$-lift $\hat\alpha:[0,b]\to B_E(0,r_3)$ of $\alpha|_{[0,b]}$ starting at $0$. We claim that $a':=\sup\{b<a|\ \hat{\alpha} \text{ exists on } [0,b]\} = a$. Indeed, suppose that
$a'<a$. Then  
\begin{align*}
L_{(\exp_{p}^{g_{\eps}})^{\ast}g_E}(\hat{\alpha}|_{[0,a')})=
L_{g_E}(\alpha|_{[0,a')})=\int_{0}^{a'}\|\alpha'(t)\|_{E}dt\leq r_4. 
\end{align*}
Hence by \eqref{final} we obtain $L_{g_E}(\hat{\alpha}|_{[0,a')})\le \tilde r$. Now let $a_{n}\nearrow a'$. Then 
$\hat{\alpha}(a_{n})\in \overline{B_E(0,\tilde r)}$, 
so some  subsequence 
$(\hat\alpha(a_{n_k}))$ converges to a point $v$ in $\overline{B_E(0,\tilde r)}$. Since $\exp_p^{g_\eps}$ is a diffeomorphism
on a neighborhood of $v$ and $\exp_p^{g_\eps}(v) = \lim \alpha(a_{n_k}) = \alpha(a')$, this shows
that $\hat\alpha$ can be extended past $a'$, a contradiction. Thus $q=\exp_{p}^{g_{\eps}}(\hat\alpha(a))\in \exp_{p}^{g_{\eps}}(\overline{B_E(0,\tilde r)})$.

For the first inclusion, take $v\in T_{p}M$ with $\|v\|_E\le r_5$ and set $q:=\exp_{p}^{g_{\eps}}(v)$.
Then for the radial geodesic $\gamma:[0,1]\rightarrow M$, $t\mapsto \exp_p^{g_\eps}(tv)$ from $p$ to $q$, 
by \eqref{LRauch} 
we obtain for $s$ small:
\begin{align*}
L_{E}(\gamma|_{[0,s]})&=\int_{0}^{s}\| T_{tv}\exp^{g_\eps}_p(v)\|_{E}\,dt\le 
e^{c_2} \|v\|_E < r_4
\end{align*}
From this we conclude that $\sup\{s\in [0,1]\mid\gamma|_{[0,s]}\subseteq B_{E}(p,r_4)\} = 1$, so
$q\in B_{E}(p,r_4)$. 
\end{proof}

Note that $\exp_{p}^{g_{\eps}}: \overline{B_E(0,\tilde{r})}\rightarrow \exp_{p}^{g_{\eps}}(\overline{B_E(0,\tilde{r})})$
is a surjective local homeomorphism between compact Hausdorff spaces, hence is a covering map. Using this, we obtain: 
\begin{Lemma}\label{injlemma}
For any $\eps<\eps_{0}$, $\exp_p^{g_{\eps}}$ is injective (hence a diffeomorphism) on $B_E(0,r_5)$.
\end{Lemma}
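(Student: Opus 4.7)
The plan is to exploit the covering map structure stated just before the lemma. Writing $\pi := \exp_p^{g_\eps}$, we have that
\[
\pi: \overline{B_E(0,\tilde r)} \longrightarrow \pi(\overline{B_E(0,\tilde r)})
\]
is a covering, and by Lemma \ref{inclusions} the open convex ball $B_E(p,r_4)$ lies in its image. My strategy is to trivialise this covering over $B_E(p,r_4)$ using its simple connectedness, and then observe that $\overline{B_E(0,r_5)}$ is small enough to be confined to a single sheet.

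First I would restrict to the preimage: set $W := \pi^{-1}(B_E(p,r_4))$, an open subset of $\overline{B_E(0,\tilde r)}$. Since the restriction of a covering to the preimage of an open subset of the base is again a covering, $\pi|_W : W \to B_E(p,r_4)$ is a covering. Next, since $B_E(p,r_4)$ is convex and therefore path-connected, locally path-connected, and simply connected, the standard lifting criterion applies: every connected component of $W$ is mapped homeomorphically onto $B_E(p,r_4)$ by $\pi$ (explicitly, the identity $B_E(p,r_4) \to B_E(p,r_4)$ lifts uniquely through $\pi|_W$ once a basepoint in the fiber is fixed, and its image is the sheet containing that basepoint). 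In particular, if $W_0$ denotes the connected component of $W$ containing $0$, then $\pi|_{W_0}$ is a bijection onto $B_E(p,r_4)$.

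It then remains to confine $\overline{B_E(0, r_5)}$ to $W_0$. By Lemma \ref{inclusions}, $\pi(\overline{B_E(0,r_5)}) \subseteq B_E(p,r_4)$, so $\overline{B_E(0,r_5)} \subseteq W$; being connected and containing $0$, it lies entirely in $W_0$. Injectivity of $\pi$ on $B_E(0,r_5)$ then follows immediately, and combined with the local diffeomorphism property on $B_E(0,r_3) \supseteq B_E(0,r_5)$ from the preceding lemma, $\pi|_{B_E(0,r_5)}$ is a diffeomorphism onto its image.

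I do not anticipate any serious obstacle: the analytic heavy lifting (the uniform Jacobi field and exponential map estimates) has already been done in Lemmas \ref{3.2}--\ref{inclusions}, and what remains is purely topological. The only mild subtlety is that $W$ is an open subset of the closed ball $\overline{B_E(0,\tilde r)}$ rather than of $T_pM$, so in invoking the lifting theorem one should appeal directly to $\pi|_W$ as a covering of $B_E(p,r_4)$ (as guaranteed by the covering status of $\pi$ on the whole closed ball, a surjective local homeomorphism between compact Hausdorff spaces) rather than try to extract open sheets of $T_pM$.
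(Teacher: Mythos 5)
Your proposal is correct, and it rests on exactly the same two ingredients as the paper's proof: the covering-map observation preceding the lemma and the inclusions of Lemma \ref{inclusions}, with convexity of $B_E(p,r_4)$ supplying simple connectedness. The implementation, however, is genuinely different. The paper argues by contradiction: given $v_0\neq v_1$ in $B_E(0,r_5)$ with the same image $q$, it joins the two radial geodesics $t\mapsto\exp_p^{g_\eps}(tv_i)$ by the straight-line homotopy $\gamma_s=s\gamma_1+(1-s)\gamma_0$, which stays in the convex ball $B_E(p,r_4)$, lifts this fixed-endpoint homotopy through the covering, and notes that the lifts $t\mapsto tv_i$ end at different points, a contradiction. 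You instead trivialize the covering over the simply connected base $B_E(p,r_4)$ and confine the connected set $\overline{B_E(0,r_5)}$ to the sheet through $0$. The two routes are equivalent pieces of covering-space theory: the paper's needs only the homotopy lifting property applied to one explicit homotopy, whereas yours invokes the restriction-of-a-covering and sheet-surjectivity facts (standard, but requiring the local path-connectedness bookkeeping you correctly flag) and in return yields the slightly stronger, more global conclusion that $\pi$ is injective on all of $W_0=\pi^{-1}(B_E(p,r_4))_0$, not just on $B_E(0,r_5)$. Both handle the one delicate point correctly, namely that the covering is a map onto the compact image $\exp_p^{g_\eps}(\overline{B_E(0,\tilde r)})$ and one must pass to the sub-base $B_E(p,r_4)$ guaranteed by Lemma \ref{inclusions}.
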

\begin{proof}
Suppose to the contrary that there exist $v_{0},v_{1}\in B_E(0,r_5),\ v_{0}\neq v_{1},$ and $\eps<\eps_{0}$ such that 
$\exp_{p}^{g_{\eps}}(v_{0})=q=\exp_{p}^{g_{\eps}}(v_{1})$. Hence, $\gamma_{i}(t):=\exp_{p}^{g_{\eps}}(tv_{i}),\ i=0,1,$ are two distinct geodesics starting at
$p$ which intersect at the point $q$. Then $\gamma_{s}(t)=s\gamma_{1}(t)+(1-s)\gamma_{0}(t)$
is a fixed endpoint homotopy connecting
$\gamma_{0}$ and $\gamma_{1}$ in the ball $B_{E}(p,r_4)$. 
Since $\exp_{p}^{g_{\eps}}$ is a covering map, and using Lemma \ref{inclusions},
we can lift this homotopy to $\overline{B_E(0,\tilde{r})}$. But the lifts of $\gamma_{0}$ and $\gamma_{1}$ are $t\mapsto tv_{i},\ i=0,1$, which obviously are not 
fixed endpoint homotopic in $\overline{B_E(0,\tilde{r})}$, a contradiction.
\end{proof}

From \eqref{LRauch} we obtain a uniform Lipschitz constant for all $\exp_p^{g_\eps}$ with $\eps<\eps_0$:
$\exists c_3>0$ such that $\forall u,v\in B_E(0,r_5)$
\begin{equation*}
\|\exp_{p}^{g_{\eps}}(u)-\exp_{p}^{g_{\eps}}(v)\|_{E}\leq c_3\|u-v\|_E. 
\end{equation*}

For the corresponding estimate from below we use the following result that provides a mean value
estimate for $C^1$-functions on not necessarily convex domains (cf.\ \cite[3.2.47]{GKOS}).
\begin{Lemma}\label{Kriegl}
Let $\Omega\subseteq\R^n$, $\Omega'\subseteq \R^m$ be open, 
$f\in C^1(\Omega,\Omega')$ and suppose that $K\Subset\Omega$. 
Then there exists $C>0$, such that $\|f(x)-f(y)\|\leq C\|x-y\|,\ \forall x,y\in K$. $C$ can be chosen
as $C_1\cdot \sup_{x\in L}(\|f(x)\|+\|Df(x)\|)$ for any fixed compact neighborhood $L$ of $K$ in $\Omega$,
where $C_1$ depends only on $L$.
\end{Lemma}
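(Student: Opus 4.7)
The plan is to reduce to the classical mean value estimate on convex sets. The only obstruction is the non-convexity of $\Omega$, which becomes irrelevant once $x$ and $y$ are close enough that the straight segment between them remains inside a prescribed compact set. Accordingly, I would split the argument into a small-separation and a large-separation case, with a threshold dictated by the compact neighborhood $L$.

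For the threshold, since $K\Subset\Omega$ and $L$ is a compact neighborhood of $K$ in $\Omega$, we have $K\subseteq \mathrm{int}(L)$, so $\delta:=\mathrm{dist}(K,\R^n\setminus\mathrm{int}(L))>0$. For $x,y\in K$ with $\|x-y\|<\delta$, the segment $[x,y]$ lies entirely in $\mathrm{int}(L)\subseteq\Omega$; applying the integral form of the mean value theorem, $f(y)-f(x)=\int_{0}^{1}Df(x+t(y-x))(y-x)\,dt$, and estimating under the integral yields
$$\|f(x)-f(y)\|\leq \sup_{z\in L}\|Df(z)\|\cdot\|x-y\|.$$
For the complementary case $x,y\in K$ with $\|x-y\|\geq\delta$, I would use the crude bound $\|f(x)-f(y)\|\leq 2\sup_{z\in L}\|f(z)\|$ and then convert it into a linear estimate by multiplying by $\|x-y\|/\delta\geq 1$:
$$\|f(x)-f(y)\|\leq \frac{2}{\delta}\sup_{z\in L}\|f(z)\|\cdot\|x-y\|.$$

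Taking the maximum of the two constants gives the claim with $C_1=\max(1,2/\delta)$, which depends only on the geometric data of $L$ (through $\delta$) and not on $f$. There is no serious obstacle; the only delicate point is that the threshold $\delta$ must be defined so as to guarantee that the segment used in the small-separation case actually remains inside $L$, which forces the use of $\mathrm{dist}(K,\R^n\setminus\mathrm{int}(L))$ rather than, for instance, $\mathrm{dist}(K,\partial\Omega)$.
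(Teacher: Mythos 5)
Your proof is correct. The paper does not prove this lemma itself but cites it from \cite[3.2.47]{GKOS}; your two-case argument (mean value inequality along the segment when $\|x-y\|$ is below the threshold $\delta=\mathrm{dist}(K,\R^n\setminus\mathrm{int}(L))$, and the crude bound $2\sup_{L}\|f\|$ converted to a linear estimate via $\|x-y\|/\delta\geq 1$ otherwise) is the standard proof of that result, and the presence of the $\|f\|$-term in the stated form of the constant is precisely what your second case accounts for.
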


Using Lemma \ref{inclusions} we now pick $r_7<r_6 =: e^{-c_2}\hat r < \hat r <r_5$ such that for $\eps<\eps_0$ we have 
\begin{equation*}
 \exp_{p}^{g_{\eps}}(\overline{B_E(0,r_7}))\subseteq B_E(p,r_6)\subseteq 
 \exp_{p}^{g_{\eps}}(\overline{B_E(0,\hat r)})\Subset \exp_{p}^{g_{\eps}}(B_E(0,r_5)).
\end{equation*}
Again by \eqref{LRauch}, we have $\forall \eps<\eps_0$,
\begin{equation*}
 e^{-c_2}\|\xi\|_E \leq \|T_{q}(\exp_{p}^{g_{\eps}})^{-1}(\xi)\|_E \leq e^{c_2}\|\xi\|_E,
\end{equation*}
$\forall q\in \overline{B_E(p,r_6)},\forall \xi\in T_qM$. Thus by Lemma \ref{Kriegl} there exists some $c_4>0$ such that
\begin{equation*}
\|(\exp_p^{g_\eps})^{-1}(q_1) - (\exp_p^{g_\eps})^{-1}(q_2)\|_E \le c_4^{-1} \|q_1-q_2\|_{E}
\end{equation*}
$\forall\eps<\eps_0$, $\forall q_1,q_2\in \exp_p^{g_\eps}(B_E(0,r_7))$.

Summing up, for all $\eps<\eps_0$ and all $u,v\in B_E(0,r_7)$ we have
\begin{equation*}
 c_4\|u-v\|_E \leq \|\exp_{p}^{g_{\eps}}(u)-\exp_{p}^{g_{\eps}}(v)\|_E \leq c_3\|u-v\|_E. 
\end{equation*}
Finally, let $\eps \rightarrow 0$. Then for all $u,v\in B_E(0,r_7)$ we get
\begin{equation*}
  c_4\|u-v\|_E \leq \|\exp_{p}^{g}(u)-\exp_{p}^{g}(v)\|_E \leq c_3\|u-v\|_E. 
\end{equation*}
Thus, $\exp_{p}^{g}$ is a bi-Lipschitz homeomorphism on $U:=B_E(0,r_7)\subseteq T_pM$. In particular,
$V=\exp^g_p(U)$ is open in $M$ (invariance of domain). This concludes
the proof of Theorem \ref{mainpseudo}. 
\section{The Riemannian case}\label{Riemsec}
In the special case where $g$ is a $C^{1,1}$ Riemannian metric, in this section we point
out some alternatives to the reasoning given in the previous section.
We start out from the following version of the Rauch comparison theorem, cf., e.g., \cite[Cor.\ 4.6.1]{Jost}.
\begin{Theorem}\label{Rauch}
Let $(M,h)$ be a smooth Riemannian manifold and suppose that $\exp_{p}^{h}$ is defined on a ball $B_{h_{p}}(0,R)$, for some $R>0$, and that there exist
$\rho \leq 0,\ \kappa > 0$ such that
the sectional curvature K of M satisfies $\rho \leq K \leq \kappa$ on some open set 
which contains
$\exp_{p}^{h}(B_{h_{p}}(0,R))$.
Then for all $v\in T_{p}M$ with $\|v\|_{h_{p}}=1$, 
all $w\in T_pM$, and all $0< t < \min(R,\frac{\pi}{\sqrt{\kappa}})$,
\begin{equation*}
\frac{\mathrm{sn}_\kappa(t)}{t}\|w\| \leq \|(T_{tv}\exp_{p}^{h})(w)\| \le \frac{\mathrm{sn}_\rho(t)}{t}\|w\|.
\end{equation*}
\end{Theorem}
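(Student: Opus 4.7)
Plan: The strategy is to recognise $(T_{tv}\exp_p^h)(w) = t^{-1} J(t)$, where $J$ is the Jacobi field along the radial unit-speed geodesic $\gamma(t)=\exp_p^h(tv)$ determined by $J(0)=0$ and $\nabla_{\gamma'}J(0)=w$ (obtained by differentiating the variation $s\mapsto\exp_p^h(t(v+sw))$), and then to bracket $\|J(t)\|$ between the Jacobi fields with the same initial data in the model spaces of constant sectional curvature $\kappa$ and $\rho$, whose norms are exactly $\|w\|\,\mathrm{sn}_\kappa(t)$ and $\|w\|\,\mathrm{sn}_\rho(t)$. Writing $w=\langle w,v\rangle v + w^\perp$, the Jacobi field splits as $J(t) = \langle w,v\rangle t\,\gamma'(t)+ J^\perp(t)$ with $J^\perp \perp \gamma'$ throughout (since $\langle J,\gamma'\rangle$ solves $(\langle J,\gamma'\rangle)''=0$ with vanishing data at $0$). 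Using the Pythagorean identity $\|J\|^2=\langle w,v\rangle^2 t^2+\|J^\perp\|^2$ and the elementary inequalities $\mathrm{sn}_\kappa(t)\le t\le \mathrm{sn}_\rho(t)$ on $(0,\pi/\sqrt{\kappa})$, both estimates reduce to the corresponding bounds for $\|J^\perp\|$ with $\|w^\perp\|$ in place of $\|w\|$.

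For the \emph{lower} bound I would run a Sturm comparison. Set $f(t):=\|J^\perp(t)\|$; combining the Kato inequality $\langle J^\perp,\nabla_{\gamma'}J^\perp\rangle^2\le\|J^\perp\|^2\|\nabla_{\gamma'}J^\perp\|^2$ with $\nabla_{\gamma'}^2 J^\perp=-R(J^\perp,\gamma')\gamma'$ and $K\le\kappa$ yields $f''(t)\ge -\kappa f(t)$. The comparison function $g(t):=\|w^\perp\|\,\mathrm{sn}_\kappa(t)$ solves $g''+\kappa g=0$ with $g(0)=0$ and $g'(0)=\|w^\perp\|=\lim_{t\to 0^+}f'(t)$, so the Wronskian identity $(f'g-fg')'=f''g-fg''\ge 0$ combined with $f'g-fg'=0$ at $0$ implies $(f/g)'\ge 0$. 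Since $f/g\to 1$ as $t\to 0^+$, this gives $f\ge g$ on $(0,\pi/\sqrt{\kappa})$, which is the lower bound.

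For the \emph{upper} bound Sturm comparison fails because the Kato term has the wrong sign, so I would resort to the index form
\begin{equation*}
 I_T(V,V)=\int_0^T\bigl(\|\nabla_{\gamma'}V\|^2-\langle R(V,\gamma')\gamma',V\rangle\bigr)\,dt.
\end{equation*}
The lower bound already established rules out conjugate points in $(0,\pi/\sqrt{\kappa})$, so $J^\perp$ minimises $I_T$ among perpendicular variations vanishing at $0$ and equal to $J^\perp(T)$ at $T$, and $I_T(J^\perp,J^\perp)=\langle \nabla_{\gamma'}J^\perp(T),J^\perp(T)\rangle$. As a trial field I would take $V(t):=(\|J^\perp(T)\|/\mathrm{sn}_\rho(T))\,\mathrm{sn}_\rho(t)\,E(t)$, where $E$ is the parallel unit field along $\gamma$ with $E(T)=J^\perp(T)/\|J^\perp(T)\|$. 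Using $K\ge\rho$ and the ODE $\mathrm{sn}_\rho''+\rho\,\mathrm{sn}_\rho=0$, an integration by parts yields $I_T(V,V)\le \|J^\perp(T)\|^2\,\mathrm{sn}_\rho'(T)/\mathrm{sn}_\rho(T)$. The minimising property then gives $(\log\|J^\perp\|^2)'(T)\le (\log\mathrm{sn}_\rho^2)'(T)$; integrating from $\epsilon$ to $T$ and sending $\epsilon\to 0^+$ (using $\|J^\perp(\epsilon)\|/\mathrm{sn}_\rho(\epsilon)\to\|w^\perp\|$) produces $\|J^\perp(T)\|\le \|w^\perp\|\,\mathrm{sn}_\rho(T)$.

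The hard part is precisely this upper bound: the clean ODE technique that closed the lower estimate does not reverse, and one is forced into the index-form calculation with its bookkeeping (choosing the parallel frame so that $E(T)$ aligns with $J^\perp(T)$, rescaling $\mathrm{sn}_\rho$ to match the endpoint norm, and converting the resulting bound on the logarithmic derivative of $\|J^\perp\|^2$ into a pointwise inequality via the known asymptotics at $0$). A symmetric alternative would run the matrix Riccati analysis $S'+S^2+R=0$ for the shape operator of geodesic spheres along $\gamma$, where the two-sided curvature hypothesis translates directly into two-sided spectral bounds on $S$, and both inequalities emerge simultaneously from one comparison argument.
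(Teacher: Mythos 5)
The paper does not prove this statement at all: Theorem \ref{Rauch} is quoted as a known result, with a pointer to \cite[Cor.~4.6.1]{Jost}. Your argument is correct and complete in outline, and it is essentially the classical textbook proof of the Rauch comparison theorem --- the tangential/normal splitting of the Jacobi field with the reduction via $\mathrm{sn}_\kappa(t)\le t\le \mathrm{sn}_\rho(t)$, a Sturm/Wronskian comparison under $K\le\kappa$ for the lower bound, and the index-form (minimality of Jacobi fields in the absence of conjugate points, which your lower bound supplies) under $K\ge\rho$ for the upper bound --- so it matches the cited source rather than anything argued in the paper itself.
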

Here, for $\alpha\in \R$,
$$
\mathrm{sn}_\alpha(t) := \left\{
\begin{array}{lr}
	 \frac{1}{\sqrt{\alpha}}\sin(\sqrt{\alpha} t) \quad & \text{for}\ \alpha>0\\
	 t \quad & \text{for}\ \alpha=0\\
	 \frac{1}{\sqrt{-\alpha}}\sinh(\sqrt{-\alpha} t) \quad & \text{for}\ \alpha<0
\end{array}
\right.
$$
As an immediate consequence, we obtain that for any $0<r<\min(R,\frac{\pi}{\sqrt{\kappa}})$, there exists
some $c>0$ such that $\forall v\in B_{h_{p}}(0,r)$, $\forall w\in T_pM$
\begin{equation}\label{rauchequ}
e^{-c}\|w\| \leq \|(T_{v}\exp_{p}^{h})(w)\| \leq e^{c}\|w\|.
\end{equation}
\begin{remark}\label{Grassmann} For $(M,g)$ a smooth Riemannian manifold, its sectional curvature $K$ is a 
smooth function on the $2$-Grassmannian bundle $G(2,TM)$. Since the fibers of $G(2,TM)$ are compact,
local bounds on the Riemann curvature tensor $R$ imply local bounds on $K$ on any relatively compact
subset of $M$. However, an analogous argument is not possible in the Lorentzian (or
general pseudo-Riemannian) setting since in that case $K$ is only defined on non-degenerate $2$-planes,
forming an open subbundle of $G(2,TM)$. Indeed, a Lorentzian manifold has bounded sectional curvature
$K$ only in the trivial case where $K$ is constant (\cite{K79}, cf.\ also \cite{Harris}). 
\end{remark}
Now let $g$ be a $C^{1,1}$ Riemannian metric $M$, and let $g_\eps$ be approximating smooth
metrics as in Section \ref{mainsec}. Then we may fix some $r'>0$ and some $\eps_0>0$
such that $\exp_p^g$ and $\exp_{p}^{g_{\eps}}$ ($\eps<\eps_0$)
are defined on $B_{g_{p}}(0,r')$ and such that
(by locally uniform convergence of $\exp_p^{g_\eps}$ to $\exp_p^g$)
there exists an open, relatively compact subset $W\subseteq M$ with
$\bigcup_{\eps<\eps_{0}}\exp_{p}^{g_{\eps}}(B_{g_{p}}(0,r'))\subseteq W$. 
On $W$, by Remarks \ref{convergence} (ii) and \ref{Grassmann} we obtain uniform bounds on the sectional curvatures 
$K_{\eps}$ of $g_{\eps}$, i.e.,
\begin{equation*}
\exists \rho\le 0,\kappa > 0: \forall \eps<\eps_0 \ \ \rho\leq K_{\eps}\leq \kappa. 
\end{equation*}
Thus by \eqref{rauchequ}, for any $r<\min(r',\frac{\pi}{\sqrt{\kappa}})$, there exists some $c>0$ 
depending only on $\rho$ and $\kappa$ such that for all $\eps<\eps_0$
\begin{equation}\label{2}
 e^{-c}\|w\|_{g_\eps} \leq \|(T_{v}\exp_{p}^{g_{\eps}})(w)\|_{g_\eps} \leq e^{c}\|w\|_{g_\eps},
\end{equation}
$\forall v\in B_{g_{p}}(0,r),\forall w\in T_{p}M$.
In particular, by the inverse function theorem every $\exp_{p}^{g_{\eps}}$ is a local diffeomorphism on $B_{g_{p}}(0,r)$. 
Thus we may rewrite \eqref{2} equivalently as
\begin{equation*}
e^{-2c}g_{\eps,p}\leq (\exp_{p}^{g_{\eps}})^{\ast}g_{\eps}\leq e^{2c}g_{\eps,p},
\end{equation*}
on $B_{g_{p}}(0,r)$. Since $g_\eps\to g$ locally uniformly, by increasing $c$ 
we obtain \eqref{final} on a suitable Euclidean ball and can proceed as in Section \ref{mainsec}.

Finally, we note that  
to obtain a common domain (and injectivity) of the
approximating exponential maps $\exp_{p}^{g_{\eps}}$ one may alternatively employ the following
result of Cheeger, Gromov and Taylor (\cite{CGT82}, the formulation below is taken from 
\cite{Chen}), which provides a lower bound on the injectivity radii
$\Inj_{g_{\eps}}(M,p)$. 
\begin{Theorem}\label{cgt}
Let $M$ be a $C^\infty$ $n$-manifold with a smooth Riemannian metric $g$. Suppose that 
$\overline{B_g(p,1)}\Subset M$ for some point $p$ in $M$. 
Then for any $K,v>0$ there exists some $i=i(K,v,n)>0$ such that if
\begin{equation*}
 \|R_{g}\|_{L^{\infty}(B(p,1))}\leq K,\  \Vol _{g}(B(p,1))\geq v,
\end{equation*}
then the injectivity radius $\Inj_{g}(M,p)$ at $p$ is bounded from below by $i$,
\begin{equation*}
 \Inj_{g}(M,p)\geq i.
\end{equation*}
\end{Theorem}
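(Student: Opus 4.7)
The plan is to combine a conjugate-radius lower bound, derived from the curvature hypothesis via comparison geometry, with a volume-packing argument that forbids short geodesic loops at $p$, thereby yielding the desired lower bound on $\Inj_g(M,p)$.

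First I would upgrade the pointwise bound on the Riemann tensor to a uniform pointwise bound $|K_\sigma|\le C_n K$ on sectional curvatures over $B_g(p,1)$; as noted in Remark \ref{Grassmann}, in the Riemannian setting this only costs a dimensional constant. The Rauch comparison theorem then guarantees a conjugate-radius lower bound $r_{\mathrm{conj}}(p)\ge \pi/\sqrt{C_n K}$. There is nothing to prove unless $\iota:=\Inj_g(M,p)$ is strictly less than $r_0:=\min(1/2,\pi/(2\sqrt{C_n K}))$, so I would henceforth assume $\iota<r_0$.

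Since $\iota<r_{\mathrm{conj}}(p)$, Klingenberg's lemma supplies a geodesic loop based at $p$ of length exactly $2\iota$. The remaining task is to show that such a short loop forces $\Vol_g(B_g(p,1))$ to be small, contradicting the lower bound $v$. A clean way to carry this out is to work with the pulled-back metric $\tilde g:=\exp_p^* g$ on $B_E(0,r_{\mathrm{conj}}(p))\subseteq T_pM$: the two-sided Rauch comparison makes $\tilde g$ uniformly comparable to the Euclidean metric with constants depending only on $K$ and $n$, and, in particular, its volume form is a uniformly bounded multiple of Lebesgue measure. The geodesic loop then yields distinct points in $\overline{B_E(0,\iota)}$ that are identified by $\exp_p$; propagating this identification along radial geodesics produces a many-sheeted structure of $\exp_p$ over a definite portion of $B_g(p,1)$. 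A packing count then bounds $\Vol_g(B_g(p,1))$ from above by a quantity that tends to $0$ with $\iota$, and comparing with $\Vol_g(B_g(p,1))\ge v$ gives the desired $\iota\ge i(K,v,n)>0$.

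The main obstacle is precisely this last quantitative step, namely converting the existence of a short geodesic loop into a sharp enough upper bound on $\Vol_g(B_g(p,1))$. The original argument of \cite{CGT82} in fact avoids an explicit packing/covering count and proceeds via Sobolev and heat-kernel estimates on the universal cover of a small geodesic ball, which has the advantage of tracking constants more transparently. Either route, however, relies on the same conceptual input: under two-sided curvature bounds, a small injectivity radius and a non-collapsed volume are incompatible, and the hypothesis $\Vol_g(B_g(p,1))\ge v$ precisely excludes the collapsing scenario.
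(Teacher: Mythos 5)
This theorem is not proved in the paper at all: it is the Cheeger--Gromov--Taylor injectivity radius estimate, imported verbatim from \cite{CGT82} (in the formulation of \cite{Chen}) and used as a black box in Section \ref{Riemsec}. There is therefore no in-paper argument to compare yours against, and you were in effect asked to reprove a substantial result from the literature.

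Your outline follows the standard skeleton of such estimates, and its first two steps are sound: passing from the bound on $R_g$ to a two-sided sectional curvature bound costs only a dimensional constant (cf.\ Remark \ref{Grassmann}), Rauch then gives the conjugate-radius bound $\pi/\sqrt{C_nK}$, and Klingenberg's lemma produces a geodesic loop at $p$ of length $2\Inj_g(M,p)$ when the injectivity radius falls below the conjugate radius (one only needs the minor observation that, since $\overline{B_g(p,1)}\Subset M$ and one may assume $\Inj_g(M,p)<1/2$, the relevant cut point is attained inside the ball). The genuine gap is the third step, which you yourself flag: the passage from ``short geodesic loop'' to ``$\Vol_g(B_g(p,1))$ is small'' is the entire content of the theorem, and your sketch of it does not go through as stated. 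In particular, ``propagating this identification along radial geodesics produces a many-sheeted structure'' is not a valid move: the Klingenberg loop is in general not smoothly closed at $p$ (it has a corner there), so it cannot simply be iterated to manufacture many preimages of $p$ under $\exp_p$, and the comparability of $\exp_p^*g$ with the Euclidean metric says nothing by itself about the covering multiplicity of $\exp_p$. The actual arguments (Cheeger's lemma via a counting/packing estimate on the pulled-back ball, or the analytic route of \cite{CGT82} through the local universal cover) require a separate, nontrivial mechanism to control that multiplicity, and deferring to ``the original argument of \cite{CGT82}'' at precisely this point means the proposal is an outline with the key step cited rather than proved. Since the paper itself treats the result as a citation, this is a defensible stance expository-wise, but it is not a proof.
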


Since the distance function $d_g$ of the $C^{1,1}$-metric $g$ induces the manifold topology,
$B_g(p,2r)$ is an open, relatively compact subset of $M$ for $r>0$ sufficiently small.
Thus for $\eps$ small, $B_{g_\eps}(p,r)\subseteq B_g(p,2r)$ is relatively compact and
$$
\Vol_{g_\eps}(B_{g_\eps}(p,r)) \ge \Vol_{g_\eps}(B_{g}(p,r/2)) \ge \frac{1}{2}\Vol_{g}(B_{g}(p,r/2))>0.
$$
By Theorem \ref{cgt}, there exists some $r_{0}$ such that
\begin{equation*}
 \Inj(g_{\eps},p)\geq r_{0},\ \forall \eps \leq \eps_{0},
\end{equation*}
so $\exp_{p}^{g_{\eps}}$ is a diffeomorphism on $B_{g_{\eps}}(p,r_{0})\ \forall \eps \leq \eps_{0}$.
Since $B_{g}(p,\frac{r_{0}}{2})\subseteq B_{g_{\eps}}(p,r_{0})$ for $\eps$ small, it follows that $\exp_{p}^{g_{\eps}}$ is a diffeomorphism on $B_{g}(p,\frac{r_{0}}{2})$.
From here, using Theorem \ref{Rauch}, we may proceed as in the argument following Lemma \ref{injlemma} to conclude 
that $\exp_{p}^{g}$ is a bi-Lipschitz homeomorphism on some neighborhood of $0\in T_pM$.

\section{Totally normal neighborhoods}
For a smooth pseudo-Riemannian metric $g$ on a manifold $M$, a neighborhood $U$ of $p\in M$
is called a normal neighborhood of $p$ if $\exp^g_p$ is a diffeomorphism from a starshaped
open neighborhood $\tilde U$ of $0\in T_pM$ onto $U$. $U$ is called {\em totally normal}
if it is a normal neighborhood of each of its points. This terminology is in line with
\cite{DoCarmo2} while, e.g., in \cite{ON83} such sets are called geodesically convex.

Analogously, if $g$ is a $C^{1,1}$-pseudo-Riemannian metric on a smooth manifold $M$
we call a neighborhood of a point $p\in M$ normal if there exists a starshaped
open neighborhood $\tilde U$ of $0\in T_pM$ such that $\exp^g_p$ is a bi-Lipschitz
homeomorphism from $\tilde U$ onto $U$. $U$ is called totally normal 
if it is a normal neighborhood of each of its points.

In what follows we adapt the standard proof for the existence of totally normal neighborhoods,
cf., e.g., \cite[Prop.\ 5.7]{ON83} (tracing back to \cite[Sec.\ 4]{W32}) to the $C^{1,1}$-situation.

\begin{Theorem} \label{totally}
Let $M$ be a smooth manifold with a $C^{1,1}$ pseudo-Riemannian metric $g$. Then each point $p\in M$ 
possesses a basis of totally normal neighborhoods.   
\end{Theorem}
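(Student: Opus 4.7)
The plan is to adapt the classical existence proof of totally normal neighborhoods (cf.\ \cite[Prop.\ 5.7]{ON83}) by studying the \emph{combined exponential map} $E^g(q,v) := (q, \exp_q^g(v))$ and establishing that $E^g$ is a bi-Lipschitz homeomorphism from a neighborhood of $(p,0)$ in $TM$ onto a neighborhood of $(p,p)$ in $M\times M$. Once this is in hand, totally normal neighborhoods are obtained by carving out a suitable convex sub-neighborhood of $p$.

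First, I revisit the estimates of Section~\ref{mainsec} and verify that they hold uniformly when the base point is allowed to vary in a small compact neighborhood of $p$. All of the relevant constants ($K_1$, $K_2$, $k$, $\mu$, and in particular the bi-Lipschitz constants $c_3, c_4$) originate in $C^1$-convergence $g_\eps\to g$ and $\eps$-uniform $C^2$-bounds on $g_\eps$, both of which are locally uniform in the base point. Hence, working in a coordinate chart around $p$, there exist $\eps_0>0$ and constants $0<c_4\leq c_3$ such that for every $q$ in a fixed compact Euclidean neighborhood of $p$ and every $\eps<\eps_0$, the map $\exp_q^{g_\eps}$ is bi-Lipschitz with constants $c_3, c_4$ on a fixed Euclidean ball in $T_qM$. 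Next I upgrade these fiberwise estimates to a joint bi-Lipschitz estimate on $E^{g_\eps}$, uniform in $\eps$. The upper Lipschitz bound follows from Lemma~\ref{dieudonne}: varying $q$ amounts to varying the initial condition of the geodesic ODE, and the lemma provides Lipschitz dependence on initial data in terms of the uniform bound on $\Gamma_{g_\eps}$. For the lower Lipschitz bound I first establish injectivity of $E^{g_\eps}$ near $(p,0)$ by a parametrized version of the covering-space argument of Lemma~\ref{injlemma}, and then apply Lemma~\ref{Kriegl} to the inverse, exactly as in the paragraph following Lemma~\ref{injlemma}. Letting $\eps\to 0$ yields that $E^g$ is a bi-Lipschitz homeomorphism from an open neighborhood $\mathcal{O}$ of $(p,0)$ in $TM$ onto an open neighborhood of $(p,p)$ in $M\times M$, in particular one containing some product $W_0\times W_0$.

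Finally, to extract totally normal neighborhoods from $W_0$, I use a convexity argument based on the Euclidean distance. For any $g$-geodesic $\gamma$ staying close to $p$, the function $f(t) := \|\gamma(t)-p\|_E^2$ is $C^{1,1}$ and, using the geodesic equation together with the uniform bound $\|\Gamma_g\|_E\leq K_2$, one computes
$$
\ddot f(t) \;\geq\; 2\|\gamma'(t)\|_E^2\bigl(1 - K_2\|\gamma(t)-p\|_E\bigr)
$$
a.e. Choosing $W := \{q : \|q-p\|_E < \eta\}\subseteq W_0$ with $\eta\leq \min(1/(2K_2),\operatorname{diam}W_0)$ forces $f$ to be strictly convex along any geodesic with endpoints in $W$, so such geodesics stay in $W$. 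For each $q\in W$ the set $\tilde U_q := \{v\in T_qM : \exp^g_q(tv)\in W \ \forall t\in[0,1]\}$ is then open and star-shaped at $0$, and the restriction $\exp^g_q : \tilde U_q\to W$ is a bi-Lipschitz homeomorphism by the estimates established above. Since $\eta>0$ can be taken arbitrarily small, this produces a basis of totally normal neighborhoods at $p$. The main obstacle is the joint bi-Lipschitz estimate on $E^{g_\eps}$, uniform in both $\eps$ and the base point: the inverse function theorem is unavailable, so the injectivity-and-covering argument of Lemma~\ref{injlemma} must be redone with an additional parameter, with careful bookkeeping to keep the Lipschitz constants $\eps$-uniform up to the limit.
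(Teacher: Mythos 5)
Your overall strategy --- observing that the constants of Section \ref{mainsec} are locally uniform in the base point, passing to the combined map $E(v)=(\pi(v),\exp^g(v))$, and then carving out a sub-neighborhood on which a squared-distance function is convex along geodesics --- is exactly the route the paper takes, and your convexity computation is the coordinate version of the paper's tensor $B_{ij}=\delta_{ij}-\sum_k\Gamma^k_{ij}x^k$. However, you substantially over-engineer what you single out as the main obstacle. A \emph{joint} bi-Lipschitz estimate for $E^{g_\eps}$, uniform in $\eps$, is not needed, and no parametrized covering argument is required: the definition of a (totally) normal neighborhood only asks that each \emph{fiberwise} map $\exp^g_q$ be bi-Lipschitz on a starshaped set, and that is already supplied by the base-point-uniform version of \eqref{uniform}. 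For $E$ itself one only needs that it is a homeomorphism onto its image, which is immediate: $E$ is continuous, and it is injective because its first component recovers $q$ and each $\exp^g_q$ is injective on the fixed ball; invariance of domain does the rest. (Even if you insist on joint Lipschitz bounds, they follow from the fiberwise ones combined with Lipschitz dependence of geodesics on the initial point via Lemma \ref{dieudonne}; no re-run of the covering argument of Lemma \ref{injlemma} with an extra parameter is needed.)

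The one place where your sketch has a genuine gap is the final step. You define $\tilde U_q:=\{v\in T_qM:\ \exp^g_q(tv)\in W\ \forall t\in[0,1]\}$, which is starshaped by construction, but you then assert that $\exp^g_q:\tilde U_q\to W$ is a bi-Lipschitz homeomorphism ``by the estimates established above''. Those estimates are only valid on the fixed ball $B_{h,q}(0,r)$, and nothing in your definition forces $\tilde U_q\subseteq B_{h,q}(0,r)$: a priori $\tilde U_q$ could contain vectors of large norm whose geodesics happen to remain in $W$, and on such vectors neither injectivity nor the Lipschitz bounds are known. The paper avoids this by reversing the order: it sets $U_q:=E^{-1}(V(\delta)\times V(\delta))\cap T_qM$, which lies inside the good ball by construction, and then \emph{proves} that $U_q$ is starshaped --- if $\bar t:=\sup\{t:[0,t]\cdot v\subseteq U_q\}<1$ for some $v\in U_q$ whose geodesic $\sigma$ stays in $V(\delta)$, then $\bar t v\in\partial U_q$ contradicts $(\exp^g_q|_{U_q})^{-1}(\sigma([0,1]))\Subset U_q$. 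You should either adopt this order of argument or add a separate estimate (e.g.\ via \eqref{10} and a length bound) showing that any geodesic with endpoints in $W$ that remains in $W$ must have initial velocity of norm less than $r$; the convexity of the single function $\|\cdot-p\|_E^2$ alone does not deliver this.
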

\begin{proof}
The main point to note is that the explicit bounds derived in Section \ref{mainsec}  
on the radius of the ball in 
$T_pM$ where $\exp_p^g$ is a bi-Lipschitz homeomorphism depend only on quantities that can be uniformly 
controlled on compact sets. Therefore, for any $p\in M$ there
exists a neighborhood $V'$ of $p$ and some $r>0$ 
such that, $\forall q\in V'$,
\begin{equation}\label{uniform}
\exp_{q}^{g}: B_{h,q}(0,r)\rightarrow  \exp_{q}^{g}(B_{h,q}(0,r))
\end{equation}
is a bi-Lipschitz homeomorphism. Here, $h$ is any background
Riemannian metric.

Now define $S:=\{v\in TM\ |\ \pi(v)\in V', \|v\|_{h}<r\}$, with $\pi$ the natural projection
of $TM$ onto $M$. Let $E:TM\rightarrow M\times M$, $E(v)=(\pi(v), \exp^g(v))$. Then by \eqref{uniform}
$E: S\to E(S)=:W$ is a continuous bijection, hence a homeomorphism by invariance of domain.
Let $(\psi=(x^{1},...,x^{n}),V)$ be a coordinate system centered at $p$
(in the smooth case $\psi$ is usually taken to be a normal coordinate system, which is not available to us, 
but this is in fact not needed). Define the $(0,2)$-tensor field $B$ on $V$  by
 \begin{equation*}
 B_{ij}(q):=\delta_{ij}-\sum_{k}\Gamma_{ij}^{k}(q)x^{k}(q).  
 \end{equation*}
Since $\psi(p)=0$ we may assume $V$ small enough that $B$ is positive definite on $V$.
In addition, we may suppose that $W\subseteq V\times V$. 
Set $N(q):=\sum_{i=1}^{n}(x^{i}(q))^{2}$, and let $V(\delta):=\{q\in V\ | \ N(q)<\delta \}$.
Then if $\delta$ is so small that $V(\delta)\times V(\delta)\subseteq W$, 
$E$ is a homeomorphism from $U_{\delta}:=E^{-1}(V(\delta)\times V(\delta))$ onto $V(\delta)\times V(\delta)$
and $\exp^g([0,1]\cdot U_\delta)\subseteq \exp^g(S)\subseteq V$.

We will show that $V(\delta)$ is totally normal.
For $q\in V(\delta)$ and $U_{q}:=U_{\delta}\cap T_{q}M$, $\exp_q^g=E|_{U_{q}}: U_{q}\rightarrow V(\delta)$ is a homeomorphism, so it is left to show that $U_{q}$ is starshaped.
Let $v\in U_q$. Then $\sigma: [0,1]\rightarrow M$, 
$\sigma(t)=\exp_{q}^{g}(tv)$ is a geodesic from $q$ to $\sigma(1)=:\tilde{q}\in V(\delta)$ that lies entirely in $V$.

If $\sigma$ is contained in $V(\delta)$ then $tv\in U_{q}$, $\forall t\in [0,1]$: suppose to the contrary
that $\bar{t}:=\sup\{t\in [0,1]|\ [0,t]\cdot v\in U_{q}\}<1$. Then $\bar{t}v\in \partial U_{q}$ and since  
$(\exp_{q}^{g}|_{U_{q}})^{-1}(\sigma([0,1])) \Subset U_{q}$, there exists some $t_{1}<\bar{t}$ 
such that $U_{q}\ni t_{1}v \notin (\exp_{q}^{g}|_{U_{q}})^{-1}(\sigma([0,1]))$,  
a contradiction.
Hence the entire segment $\{tv|\ t\in [0,1]\}$ lies in $U_{q}$, so $U_{q}$ is starshaped.
It remains to show that $\sigma$ cannot leave $V(\delta)$. If it did, there would exist $t_{0}\in [0,1]$ such that $N(\sigma(t_{0}))\geq \delta$. Since $N(q), N(\tilde{q})<\delta$, 
the function $t\mapsto N\circ \sigma$ has a maximum at some point $\tilde{t}\in (0,1)$. 
However,
\begin{equation*}
\frac{d^{2}(N\circ\sigma)}{dt^{2}} (\tilde t)
=2B_{\sigma(\tilde{t})}((\psi\circ\sigma)'(\tilde{t}),(\psi\circ\sigma)'(\tilde{t}))>0, 
	\end{equation*}
a contradiction. 
\end{proof}

{\bf Acknowledgements.} We are grateful to Anton Petrunin, who put us on the right track when we first
posted the question that led to this article on MathOverflow. We also would like to thank 
Piotr Chru\'sciel, James D.\ E. Grant, Stefan Haller, and James A.\ Vickers for helpful discussions. 
The authors acknowledge the support
of FWF projects P23714 and P25326, as well as OeAD project WTZ CZ 15/2013.

\end{document}